\documentclass[11pt]{amsart}

\usepackage{amsmath, amssymb, amsthm, amsfonts, enumerate, color, comment}
\usepackage{mathrsfs}
\usepackage{parskip}
\usepackage{xcolor}

\usepackage{mathtools}
\mathtoolsset{showonlyrefs}

\usepackage{palatino}
\usepackage{graphicx}

\usepackage{parskip}

\numberwithin{equation}{section}
\theoremstyle{plain}
\newtheorem{Proposition}[equation]{Proposition}
\newtheorem{Corollary}[equation]{Corollary}
\newtheorem*{Corollary*}{Corollary}
\newtheorem{Theorem}[equation]{Theorem}
\newtheorem*{Theorem*}{Theorem}
\newtheorem{Lemma}[equation]{Lemma}
\theoremstyle{definition}

\newtheorem{Example}[equation]{Example}
\newtheorem{Remark}[equation]{Remark}
\newtheorem{Question}[equation]{Question}

\usepackage{enumitem}
\setlist[enumerate]{leftmargin=*}
\setlist[itemize]{leftmargin=*}

\setlist[enumerate,1]{label=(\alph*),font=\upshape}

\setlist[enumerate,2]{label=(\roman*),font=\upshape}

\def\C{\mathbb{C}}

\def\D{\mathbb{D}}

\def\Z{\mathbb{Z}}

\renewcommand{\leq}{\leqslant}
\renewcommand{\geq}{\geqslant}
\renewcommand{\subset}{\subseteq}
\renewcommand{\phi}{\varphi}
\renewcommand{\vec}[1]{{\bf #1}}
 \renewcommand{\Re}[1]{\operatorname{Re} #1 }

\usepackage{xcolor}

	\author[A. Belli]{Anil Belli}
	\address{Department of Mathematics, University of Thessaloniki, 54124 Thessaloniki,
Greece}
	\email{anilbelli@math.auth.gr}
	
	\author[U. Gul]{Ugur Gul}
	\address{Hacettepe University, Department of Mathematics, 06800, Beytepe, Ankara, Turkey}
	\email{gulugur@gmail.com}

\author[W. Ross]{William T. Ross}
	\address{Department of Mathematics and Computer Science, University of Richmond, Richmond, VA 23173, USA}
	\email{wross@richmond.edu}
	
	\author[A. Siskakis]{Aristomenis G. Siskakis}
	\address{Department of Mathematics, University of Thessaloniki, 54124 Thessaloniki,
Greece}
	\email{siskakis@math.auth.gr}
	
	\subjclass[2010]{26A42, 47B38}

\title{The Ces\`{a}ro operator on $L^2(0, 1)$}

\keywords{Semigroups, Ces\`{a}ro operator, $L^{2}(0, 1)$, invariant subspaces, spectral properties}

\begin{document}

\begin{abstract}
This paper explores a version of the classical Ces\`{a}ro integral operator for the Lebesgue space $L^2(0, 1)$ where we discuss its norm, adjoint, spectral properties, and invariant subspaces. An important tool will be semigroups of weighted composition operators on $L^2(0, 1)$.
\end{abstract}

\maketitle

\section{Introduction}

There is the well-known and well studied {\em Ces\`{a}ro operator }$\mathscr{C}$ on the classical Hardy space $H^2$ \cite{Duren} of the open unit disk $\D:= \{z: |z| < 1\}$ defined by 
\begin{equation}\label{Adf88UasA}
(\mathscr{C}f)(z) = \frac{1}{z} \int_{0}^{z} \frac{f(\xi)}{1 - \xi} d \xi, \quad z \in \D,
\end{equation}
where a 1965 paper of Brown, Halmos, and Shields \cite{MR187085} showed that $\mathscr{C}$ is a bounded linear operator on $H^2$ satisfying
$$\|\mathscr{C}\| = 2, \; \; \sigma(\mathscr{C}) = \overline{D(1, 1)}, \; \;  \sigma_{p}(\mathscr{C}) = \varnothing, \; \sigma_{p}(\mathscr{C}^{*}) = D(1, 1),$$ 
$$(\mathscr{C}^{*} f)(z) = \frac{1}{1 -z} \int_{z}^{1} f(\xi) d \xi,$$
and $\mathscr{C}$ is a hyponormal operator on $H^2$ (i.e., $\mathscr{C}^{*} \mathscr{C} - \mathscr{C} \mathscr{C}^{*} \geq 0$). In the above, $\|\mathscr{C}\|$ denotes the operator norm of $\mathscr{C}$, $D(a, r) := \{z: |z - a| < r\} $, $\sigma(\mathscr{C})$ denotes the spectrum of $\mathscr{C}$, $\sigma_{p}(\mathscr{C})$ denotes its point spectrum, and $\mathscr{C}^{*}$ denotes the adjoint of $\mathscr{C}$. Throughout this paper, $\overline{A}$ will denote the closure of a set $A \subset \C$. The matrix representation of $\mathscr{C}$ with respect to the orthonormal basis $\{z^n: n \geq 0\}$ for the Hardy space $H^2$ is the famous Ces\`{a}ro matrix
$$\begin{bmatrix}
1 \;  & 0 \; & 0 \; & 0 \; & 0  \; & \! \cdots\\[2.5pt]
\tfrac{1}{2} \; & \tfrac{1}{2} \; & 0 \; & 0 \; & 0 \; &  \cdots\\[2.5pt]
\frac{1}{3} \; & \frac{1}{3} \; & \frac{1}{3} \; & 0 \; & 0  \; &  \cdots\\[2.5pt]
\frac{1}{4} \; & \frac{1}{4} \; & \frac{1}{4} \; & \frac{1}{4} \; & 0 \; &  \cdots\\[2.5pt]
\frac{1}{5} \; & \frac{1}{5} \; & \frac{1}{5} \; & \frac{1}{5} \; & \frac{1}{5} \; &  \!\cdots\\[1pt]
 \vdots & \vdots & \vdots & \vdots & \vdots  & \ddots
\end{bmatrix}.$$
In fact, the original Ces\`{a}ro operator explored in  \cite{MR187085} was given as a matrix operator on the sequence space $\ell^2$ but, due to the natural isometric isomorphism between $\ell^2$ and the Hardy space $H^2$,  it can be realized as an integral operator on the analytic function space $H^2$.

This work was continued by Kriete and Trutt \cite{MR281025} who observed in 1971 that $\mathscr{C}$ is cyclic (i.e., the orbit $\{\mathscr{C}^n \chi_{\D}: n \geq 0\}$ of $\chi_{\D}$,  the constant function equal to one on $\D$, has dense linear span in $H^2$) and, somewhat surprisingly,  a subnormal operator (i.e., has a normal extension). Still further, the invariant subspaces of the Ces\`{a}ro operator were studied in  \cite{GPR, MR4757014, MR350489}. Due to their complexity and their abundance, the invariant subspaces for $\mathscr{C}$ can be quite complicated and our current understanding of them is incomplete.  Since these initial papers, subsequent authors  have examined the Ces\`{a}ro operator, and operators  closely related to it,  on other Hilbert and Banach spaces of analytic functions on $\D$ \cite{MR5037546}. 

This paper explores  the analogous properties of the real variable {\em Ces\`{a}ro operator} $C$ on the classical Lebesgue space  $L^2(0, 1)$ defined as 
\begin{equation}\label{66TzzzCesaroewal}
(C f)(x) := \frac{1}{x} \int_{0}^{x} \frac{f(t)}{1 - t} dt, \quad 0 < x < 1.
\end{equation}
Of course one sees how this formula is inspired by the corresponding integral  formula for the classical Ces\`{a}ro operator $\mathscr{C}$ on $H^2$. Observe how $\mathscr{C}$ acts on a space of analytic functions on $\D$ and so  the integration is along any (rectifiable) path in $\D$ from $\xi = 0$ to $\xi = z$ while $C$ acts on the Lebesgue space of functions on $(0, 1)$ and so  the path of integration is along the interval $(0, 1)$ from $t = 0$ to $t = x$. 
To add a bit of historical context, the  Ces\`{a}ro operator from \eqref{66TzzzCesaroewal}  is amongst a class of well-known integral operators on $L^2(0, 1)$ such as 
the {\em Volterra operator}
\begin{equation}\label{Volt}
(V f)(x)= \int_{0}^{x} f(t)dt
\end{equation}
and the {\em Hardy averaging operator }
\begin{equation}\label{Haaa}
(H f)(x) = \frac{1}{x} \int_{0}^{x} f(t) dt,
\end{equation}
where  their norms, spectra, and invariant subspaces have been explored \cite{MR31110, MR92124, GPRH, MR4545809, MR5037546, MR3793153}. As far as we know, the Ces\`{a}ro operator  $C$ on $L^2(0, 1)$ has not been previously studied. There is also an  issue of nomenclature. Some authors  \cite{MR187085} call the Hardy operator $H$ the ``continuous Ces\`{a}ro operator'' while others call $H$ the ``real variable Ces\`{a}ro operator''. As to not invite confusion, we will do our best to be specific and always call $C$ the Ces\`{a}ro operator on $L^{2}(0, 1)$.

In this paper we show that the Ces\`{a}ro operator $C$ from \eqref{66TzzzCesaroewal} defines a bounded linear operator on $L^2(0, 1)$ satisfying 
$$\|C\| = 2, \; \sigma(C) = \partial D(1, 1), \;  \sigma_{p}(C) = \sigma_{p}(C^{*}) = \varnothing,$$
and 
$$(C^{*} f)(x) = \frac{1}{1 - x} \int_{x}^{1} \frac{f(t)}{t}dt.$$
In the above, $\partial D(1, 1) = \{z: |z - 1| = 1\}$. Along the way, we compute a formula for the resolvent operator $(\lambda I - C)^{-1}, \lambda \not \in \partial D(1, 1)$. 
 We also show that $C$ is a cyclic and   {\em normal} operator on $L^2(0, 1)$ (i.e., $C^{*}C = C C^{*}$). Note the differences between the properties of the Ces\`{a}ro operator $C$ on $L^2(0, 1)$ and those for the classical Ces\`{a}ro operator $\mathscr{C}$ on $H^2$. 
The  spectral theorem for cyclic normal Hilbert space operators \cite[p.~275]{ConwayFA} implies that $C$ is unitarily equivalent to the multiplication operator $M_{z} f = z f$ on $L^2(\mu)$ for some positive finite Borel measure $\mu$ on $\sigma(C) = \partial D(1, 1)$. We identify this measure $\mu$ as well as the intertwining isometric isomorphism yielding this equivalence. 
  Via some semigroup techniques, described below,  along with the Beurling--Lax theorem \cite[Lec.~V]{MR0171178} \cite[p.~204]{MR3890074}, which explores the translation invariant subspaces of $L^2(-\infty, \infty)$, we also completely describe the invariant subspaces for $C$. 

An important tool in our analysis will be an associated strongly continuous semigroup $\{S_{t}\}_{t \geq 0}$ of weighted composition operators on $L^2(0, 1)$. Analogous to results  for the classical Ces\`{a}ro operator \cite{MR4757014} and the Hardy averaging operator \cite{GPRH}, we will show that a closed subspace $\mathcal{M}$ of $L^2(0, 1)$ is invariant for $C$, i.e., $C \mathcal{M} \subset \mathcal{M}$, if and only if $\mathcal{M}$ is invariant for each element of the semigroup $\{S_t\}_{t \geq 0}$. Each $S_t$ is an interesting operator in its own right and we will represent it as a bilateral shift operator on a vector-valued $\ell^2(\Z)$ space as was explored in \cite{MR2858500, MR3057690} in a related setting involving weighted composition operators on $L^2(0, 1)$. Linearizing the semigroup $\{S_{t}\}_{t \geq 0}$ via an isometric isomorphism from $L^2(0, 1)$ onto $L^2(-\infty, \infty)$ will convert it to a semigroup of translations $\{T_t\}_{t \geq 0}, (T_{t} f)(x)  = f(x + t)$, where the Beurling--Lax theorem allows us to characterize the invariant subspaces of the Ces\`{a}ro operator on $L^2(0, 1)$. 

A subsequent paper \cite{CLp} will expand our discussion of the Ces\`{a}ro operator on $L^2(0, 1)$ to  $L^p(0, 1)$ for $ 1 \leq  p <  \infty$, where the results become more complicated. 

The authors would like to thank Benoit F. Sehba for pointing out a small error in an earlier version of this paper. 

\section{Norm, Spectrum, and Normality}

Below we will use 
$$\|f\| := \Big( \int_{0}^{1} |f(x)|^2 dx\Big)^{\frac{1}{2}}$$ to denote the norm on $L^2(0, 1)$ and $\langle f, g\rangle$  the corresponding inner product. Our first order of business is to establish that the Ces\`{a}ro operator $C$ on $L^2(0, 1)$ is bounded. This can be accomplished in several ways. We will use Schur's test \cite[p.~77]{MR4545809}. 

\begin{Proposition}[Schur's test]
Let $(X, \mu)$ be a measure space and $\mu$ a positive measure on $X$. If $k(x, y)$ is a nonnegative measurable function on $X \times X$, $p(x)$, $q(x)$ are a nonnegative measurable functions on $X$, and $\alpha, \beta$ are positive constants such that 
$$\int_{X} k(x, y) p(y) d\mu(y) \leq \alpha q(x) \; \text{for $\mu$-almost every $x \in X$}$$
and
$$\int_{X} k(x, y) q(x) d\mu(x) \leq \beta p(y) \; \text{for $\mu$-almost every $y \in X$},$$ then 
$$(T f)(x) = \int_{X} k(x, y) f(y) d \mu(y)$$ defines a bounded linear operator on $L^2(\mu)$ with $\|T\| \leq \sqrt{\alpha \beta}$.
\end{Proposition}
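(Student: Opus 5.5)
The plan is the standard Cauchy--Schwarz argument with the kernel split as $k = k^{1/2}\cdot k^{1/2}$ and the weights $p$, $q$ inserted so that each factor can be integrated using one of the two hypotheses. Fix $f \in L^2(\mu)$. We first handle $|f|$, so all integrands are nonnegative and Tonelli's theorem applies freely. Measurability of $x \mapsto \int k(x,y)|f(y)|\,d\mu(y)$ follows from Tonelli, assuming $\mu$ is $\sigma$-finite or that the functions involved are $\sigma$-finitely supported, as is implicit in the statement. Where $p(y)=0$, the second hypothesis only gives $\int k(x,y)q(x)\,d\mu(x)=0$. To avoid dividing by zero, we may therefore interpret the factorization on the set where $p>0$; alternatively, we first replace $p$ by $p+\epsilon$ in the first step and handle the issue by monotone convergence. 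I expect this bookkeeping to be the only delicate point.

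For almost every $x$ we write
$$k(x,y)|f(y)| = \bigl(k(x,y)p(y)\bigr)^{1/2}\cdot \bigl(k(x,y)/p(y)\bigr)^{1/2}|f(y)|.$$
Cauchy--Schwarz in $y$ then gives
$$\Bigl(\int k(x,y)|f(y)|\,d\mu(y)\Bigr)^2 \le \int k(x,y)p(y)\,d\mu(y)\cdot \int \frac{k(x,y)}{p(y)}|f(y)|^2\,d\mu(y).$$
By the first hypothesis this is at most
$$\alpha\, q(x)\int \frac{k(x,y)}{p(y)}|f(y)|^2\,d\mu(y).$$

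Integrating in $x$ and applying Tonelli to swap the order of integration, we obtain
$$\int |Tf|^2\,d\mu \le \alpha\int \frac{|f(y)|^2}{p(y)}\Bigl(\int k(x,y)q(x)\,d\mu(x)\Bigr)d\mu(y).$$
The second hypothesis bounds the inner integral by $\beta p(y)$, so
$$\int |Tf|^2\,d\mu \le \alpha\beta\,\|f\|^2.$$

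In particular, the defining integral converges absolutely for almost every $x$. Hence $T$ is well-defined and linear on $L^2(\mu)$, and since $|Tf|\le T|f|$ pointwise, we conclude $\|T\|\le\sqrt{\alpha\beta}$.
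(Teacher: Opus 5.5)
Your Cauchy--Schwarz/Tonelli computation is the standard proof of Schur's test; the paper gives no proof and simply quotes the result. The argument is complete only when $p>0$ $\mu$-almost everywhere, though. The issue you set aside as bookkeeping is a genuine gap, and it cannot be closed, because with $p$ and $q$ merely nonnegative the statement is false. If $p\equiv q\equiv 0$, both hypotheses read $0\le 0$ for every choice of $\alpha,\beta>0$, so the conclusion would force $T=0$ for every nonnegative kernel. Yet $k\equiv 1$ on $(0,1)$ gives $Tf=\int_0^1 f\,dx$, which has norm $1$. A less degenerate example is $p=q=\chi_{(0,1/2)}$ and $k=\chi_{(1/2,1)\times(1/2,1)}$. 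Both integrals in the hypotheses vanish identically, so the hypotheses hold for arbitrarily small $\alpha,\beta$, but $Tf=\chi_{(1/2,1)}\int_{1/2}^1 f\,dy$ has $\|T\|=1/2$. The hypotheses carry no information about how $T$ acts on functions supported in $\{p=0\}$.

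This is also why neither repair you suggest works. Restricting the factorization to $\{p>0\}$ only bounds $T(f\chi_{\{p>0\}})$ and leaves $T(f\chi_{\{p=0\}})$ uncontrolled, as the second example shows. Replacing $p$ by $p+\epsilon$ breaks the first hypothesis. The extra term $\epsilon\int_X k(x,y)\,d\mu(y)$ is not dominated by any multiple of $q(x)$ and may even be infinite. Its contribution also does not disappear as $\epsilon\to 0$ when $f$ has mass on $\{p=0\}$: in the second example it equals $\tfrac14\int_{1/2}^1|f|^2\,dy$ for every $\epsilon$.

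What is needed is an extra hypothesis, namely $p>0$ $\mu$-a.e., as in the usual formulation of Schur's test, together with the $\sigma$-finiteness you already flagged for Tonelli. Under this hypothesis your argument goes through verbatim. If $q(x)=0$, the first hypothesis gives $k(x,\cdot)\,p=0$ a.e., hence $k(x,\cdot)=0$ a.e., so no $0\cdot\infty$ problem arises in the Cauchy--Schwarz step. After Tonelli, the weight $|f(y)|^2/p(y)$ is multiplied by at most $\beta p(y)$ for a.e.\ $y$. Both conditions hold in the paper's application, where $\mu$ is Lebesgue measure on $(0,1)$ and $p(x)=q(x)=(x(1-x))^{-1/2}>0$.
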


Schur's test enables us to prove the boundedness of $C$ on $L^2(0, 1)$.

\begin{Theorem}
The Ces\`{a}ro operator $C$ is bounded on $L^2(0, 1)$. 
\end{Theorem}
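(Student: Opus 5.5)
We will apply Schur's test (the Proposition above) with $X = (0,1)$, Lebesgue measure, and kernel
$$k(x,t) = \frac{1}{x(1-t)}\,\chi_{\{t < x\}}(x,t), \qquad 0<x,t<1,$$
so that $(Cf)(x) = \int_0^1 k(x,t) f(t)\,dt$. The task is to choose the test functions $p$ and $q$ well. Since $\|C\| = 2$ is claimed, we aim for test functions giving $\alpha\beta = 4$. A natural guess comes from the Hardy operator $H$, where $p(t)=q(t)=t^{-1/2}$ gives $\alpha=\beta=2$. To absorb the factor $1/(1-t)$, we try
$$p(t) = q(t) = \frac{1}{\sqrt{t(1-t)}},$$
and if this fails we will adjust the exponents.

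\textbf{First Schur inequality.} For fixed $x$ we must bound
$$\frac1x \int_0^x \frac{dt}{(1-t)\sqrt{t(1-t)}} = \frac1x\int_0^x t^{-1/2}(1-t)^{-3/2}\,dt.$$
Substituting $t = \sin^2\theta$, or simply differentiating, gives $\frac{d}{dt}\Big(2\sqrt{t/(1-t)}\Big) = t^{-1/2}(1-t)^{-3/2}$. The integral therefore equals
$$\frac{2}{x}\sqrt{\frac{x}{1-x}} = \frac{2}{\sqrt{x(1-x)}} = 2q(x),$$
so $\alpha = 2$.

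\textbf{Second Schur inequality.} For fixed $t$ we must bound
$$\frac{1}{1-t}\int_t^1 \frac{dx}{x\sqrt{x(1-x)}} = \frac{1}{1-t}\int_t^1 x^{-3/2}(1-x)^{-1/2}\,dx.$$
An antiderivative of $x^{-3/2}(1-x)^{-1/2}$ is $-2\sqrt{(1-x)/x}$. The integral therefore equals
$$\frac{1}{1-t}\cdot 2\sqrt{\frac{1-t}{t}} = \frac{2}{\sqrt{t(1-t)}} = 2p(t),$$
so $\beta = 2$.

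\textbf{Conclusion.} Schur's test then yields that $C$ is bounded on $L^2(0,1)$ with $\|C\| \le \sqrt{2\cdot 2} = 2$. The only real obstacle is choosing $p$ and $q$. Once $t^{-1/2}(1-t)^{-1/2}$ is chosen, both integrals are elementary, and the symmetry $x \leftrightarrow 1-x$ between the two kernels makes the second computation mirror the first. If this choice had failed, the fallback would be $p=q=t^{-a}(1-t)^{-b}$, optimizing over $a$ and $b$; only boundedness is needed here, not the sharp constant.
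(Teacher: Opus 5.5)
Your proposal is correct and matches the paper's proof exactly: Schur's test with kernel $k(x,t) = \frac{1}{x(1-t)}\chi_{\{t<x\}}$, test functions $p = q = (t(1-t))^{-1/2}$, and $\alpha = \beta = 2$, giving $\|C\| \leq 2$. Your explicit antiderivatives $2\sqrt{t/(1-t)}$ and $-2\sqrt{(1-x)/x}$ are correct. They fill in the two ``integral computations'' that the paper leaves to the reader.
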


\begin{proof}
Apply Schur's test to $L^2(0, 1)$ with the parameters
\begin{equation}\label{kkkKKX}
k(x, t) := \begin{cases}
0 & \mbox{if $x \leq t \leq 1$},\\[5pt]
{\displaystyle \frac{1}{x} \frac{1}{1 - t}} & \mbox{if $0 < t \leq x$},
\end{cases}
\end{equation}
$$p(x) = q(x) = \frac{1}{\sqrt{x (1 - x)}} , \quad \alpha = \beta = 2.$$
Then an integral computation verifies that 
$$
\int_{0}^{1} k(x, t) p(t) dt  = 2 q(x)$$
and another integral calculation gives us 
$$\int_{0}^{1} k(x, t) q(x) dx  =  2 p(t).$$
By Schur's test, $C$ is bounded on $L^2(0, 1)$ with $\|C\| \leq 2$. 
\end{proof}

In Theorem \ref{boundedCCC} below, we will sharpen  the inequality above and show that  $\|C\| = 2$. But first we need a formula for the adjoint of $C$. 

\begin{Theorem}\label{adskksdf}
For the Ces\`{a}ro operator $C$ on $L^2(0, 1)$, its adjoint $C^{*}$ satisfies 
$$(C^{*} f)(x) = \frac{1}{1 - x} \int_{x}^{1} \frac{f(t)}{t}dt, \quad 0 < x < 1.$$
\end{Theorem}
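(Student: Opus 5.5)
The plan is to compute $\langle Cf, g\rangle$ for $f, g \in L^2(0,1)$ and interchange the order of integration by Fubini's theorem. Write $Cf$ as the integral operator with the kernel $k(x,t)$ from \eqref{kkkKKX}. Then
$$\langle Cf, g\rangle = \int_0^1 \Big(\int_0^x \frac{f(t)}{x(1-t)}\,dt\Big)\overline{g(x)}\,dx .$$
Swapping the integrals over the triangle $\{0<t\le x<1\}$ gives
$$\int_0^1 f(t)\,\frac{1}{1-t}\,\overline{\int_t^1 \frac{g(x)}{x}\,dx}\,dt .$$
This is $\langle f, Kg\rangle$, where $(Kg)(t) = \frac{1}{1-t}\int_t^1 \frac{g(x)}{x}\,dx$. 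Since $C$ is bounded by the preceding theorem, its adjoint is the unique bounded operator with $\langle Cf,g\rangle = \langle f, C^*g\rangle$ for all $f,g$. So once the identity is justified, $C^* = K$ almost everywhere, which is the stated formula. The kernel is real, so no conjugation issues arise beyond the obvious ones.

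The only real obstacle is justifying Fubini. The integrand is not obviously absolutely integrable because of the singular factors $1/x$ and $1/(1-t)$. I would handle this with the same Schur weights $p(x)=q(x)=(x(1-x))^{-1/2}$ used in the boundedness proof. The standard Schur argument, namely Cauchy--Schwarz with the weight split $k = (k\,p(t)/p(x))^{1/2}(k\,p(x)/p(t))^{1/2}$, shows that
$$\iint k(x,t)\,|f(t)|\,|g(x)|\,dt\,dx \le 2\,\|f\|\,\|g\| < \infty .$$
Equivalently, one can simply note that the boundedness of $C$ applied to $|f|$ gives $\int (C|f|)(x)\,|g(x)|\,dx \le \|C\|\,\|f\|\,\|g\|$. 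This is exactly the absolute double integral, since $k\ge 0$.

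This estimate gives absolute integrability of $k(x,t)f(t)\overline{g(x)}$ on $(0,1)^2$, so Tonelli and Fubini apply and the interchange is legitimate. The same bound shows that the inner integral $\int_t^1 g(x)/x\,dx$ converges absolutely for almost every $t$, so $Kg$ is well defined almost everywhere. That completes the argument.
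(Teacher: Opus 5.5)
Your proposal is correct and follows the same route as the paper, which just observes that the adjoint of the integral operator with kernel $k(x,t)$ is the integral operator with the transposed kernel $k^{*}(x,t)=k(t,x)$; this is exactly your Fubini interchange over the triangle $\{0<t\leq x<1\}$. The paper leaves the justification to ``basic facts about integral operators,'' and your Tonelli bound $\iint k(x,t)\,|f(t)|\,|g(x)|\,dt\,dx \leq \|C\|\,\|f\|\,\|g\|$, obtained by applying the boundedness of $C$ to $|f|$, supplies the detail it omits.
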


\begin{proof}  The formula from \eqref{kkkKKX}  says that 
$$(C f)(x) = \int_{0}^{1} k(x, t) f(t) dt$$
and thus, from basic facts about integral operators (which can be verified quite easily), 
$$(C^{*} f)(x) = \int_{0}^{1} k^{*}(x, t) f(t) dt,$$
where 
\begin{equation}\label{k*}
k^{*}(x, t) := 
\begin{cases}
0 & \mbox{if $0 \leq t \leq x$},\\[5pt]
{\displaystyle \frac{1}{t} \cdot \frac{1}{1 - x}} & \mbox{if $0 < x \leq t$}.
\end{cases}
\end{equation}
This yields the desired adjoint formula.
\end{proof}

We now proceed to compute the norm and spectrum of the Ces\`{a}ro operator $C$ on $L^2(0, 1)$ along with proving its normality. We begin by defining the change of variables 
\begin{equation}\label{xxxuuu}
u = \log \frac{1 - x}{x}, \quad x(u) = \frac{1}{1 + e^u}
\end{equation} and observe that the map  $u \mapsto x(u)$ defines a diffeomorphism of $(-\infty, \infty)$ onto $(0, 1)$. 

\begin{Proposition}\label{sdfsdf88}
If $f \in L^2(0, 1)$ and $\Phi f$ is the function on $(-\infty, \infty)$ defined by
$$(\Phi f)(u) := \frac{e^{\frac{u}{2}}}{1 + e^{u}} f \Big(\frac{1}{1 + e^{u}}\Big), \quad -\infty < u < \infty,$$
then the  map $f \mapsto \Phi f$ is an isometric isomorphism from $L^2(0, 1)$ onto $L^2(-\infty, \infty)$. 
\end{Proposition}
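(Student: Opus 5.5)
The plan is a direct change of variables using the diffeomorphism $u \mapsto x(u) = 1/(1+e^u)$ from \eqref{xxxuuu}. The key observation is that the weight appearing in $\Phi$ is exactly the square root of the Jacobian. Indeed,
$$x'(u) = -\frac{e^{u}}{(1+e^{u})^{2}}, \qquad \text{so} \qquad |x'(u)| = \Big(\frac{e^{u/2}}{1+e^{u}}\Big)^{2}.$$
Thus $(\Phi f)(u) = |x'(u)|^{1/2} f(x(u))$, which is the standard unitary attached to a diffeomorphism.

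The first step is the isometry. For $f \in L^2(0,1)$ we compute
$$\int_{-\infty}^{\infty} |(\Phi f)(u)|^{2}\, du = \int_{-\infty}^{\infty} |f(x(u))|^{2} |x'(u)|\, du = \int_{0}^{1} |f(x)|^{2}\, dx.$$
This uses the change of variables formula for nonnegative measurable functions under the monotone $C^1$ diffeomorphism $x(u)$. The map is decreasing, so orientation reverses and the absolute value handles the sign. In particular, $\Phi f \in L^2(-\infty,\infty)$, and $\Phi$ is clearly linear. Moreover, $\Phi$ is well defined on equivalence classes, because $x(\cdot)$ and its inverse map null sets to null sets.

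The second step is surjectivity, which I would prove by writing down the inverse. Given $g \in L^2(-\infty,\infty)$, use $u(x) = \log\frac{1-x}{x}$ and define
$$(\Psi g)(x) := |u'(x)|^{1/2} g(u(x)) = \frac{1}{\sqrt{x(1-x)}}\, g\Big(\log\frac{1-x}{x}\Big).$$
The same change-of-variables computation, run in reverse, shows $\|\Psi g\| = \|g\|$. We then check that $\Phi\Psi g = g$, using $|x'(u)|^{1/2}\,|u'(x(u))|^{1/2} = 1$ by the chain rule. Equivalently, one can verify the identity $\frac{e^{u/2}}{1+e^u} = \sqrt{x(1-x)}$ when $x = x(u)$. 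Hence $\Phi$ is onto, and $\Psi = \Phi^{-1}$.

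I do not expect a real obstacle here. The only care needed is the algebraic identity $e^{u/2}/(1+e^u) = \sqrt{x(u)(1-x(u))}$ and the correct handling of the orientation reversal in the substitution.
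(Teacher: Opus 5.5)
Your proposal is correct and follows essentially the same route as the paper. Both write $\Phi f = f(x(u))\sqrt{|x'(u)|}$, get the isometry by the change of variables, and get surjectivity from the explicit inverse. Your $\Psi g = \frac{1}{\sqrt{x(1-x)}}\, g\bigl(\log\frac{1-x}{x}\bigr)$ is the paper's formula $\frac{1}{x}\sqrt{\frac{x}{1-x}}\, g\bigl(\log\frac{1-x}{x}\bigr)$, and you additionally verify $\Phi\Psi = I$ via the chain rule, a step the paper leaves implicit.
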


\begin{proof}
Since $x(u) = (1 + e^{u})^{-1}$  and $u \mapsto x(u)$ maps $(-\infty, \infty)$ smoothly onto $(0, 1)$, we can write $\Phi f$ as 
$$(\Phi f)(u) = f(x(u)) \sqrt{|x'(u)|}.$$ Thus,  an integral change of variables yields 
\begin{align*}
\|\Phi f\|^{2}_{L^2(-\infty, \infty)} & = \int_{-\infty}^{\infty} |f(x(u))|^2 |x'(u)| du\\
& = \int_{0}^{1} |f(x)|^2 dx,
\end{align*}
proving that $f \mapsto \Phi f$ is a linear  isometry. The surjectivity of $\Phi$ comes from the fact that $x(u)$ is a diffeomorphism. In fact, 
\begin{equation}\label{gginnd}
(\Phi^{-1} g)(x) = \frac{1}{x} \sqrt{\frac{x}{1 - x}} g\Big(\log \frac{1 - x}{x}\Big), \quad 0 < x < 1, \quad g \in L^{2}(-\infty, \infty),
\end{equation}
which proves the result.
\end{proof}

Conjugating $C$ by the isometric isomorphism $\Phi$ produces convolution operator on $L^2(-\infty, \infty)$ in the following way.
The computation below will use the change of variables 
$$s = \log \frac{1 - t}{t}, \quad ds = -\frac{1}{t(1 - t)} dt.$$
Proposition \ref{sdfsdf88} says that 
$\Phi \circ C \circ \Phi^{-1}: L^2(-\infty, \infty) \to L^2(-\infty, \infty)$ and so  for $f \in L^2(-\infty, \infty)$ and almost every $u \in (-\infty, \infty)$ we have 
\begin{align*}
(\Phi \circ C \circ \Phi^{-1} f)(u) & = \Phi \circ C \left(\frac{1}{x} \sqrt{\frac{x}{1 - x}} f\Big(\log \frac{1 - x}{x}\Big)\right)(u)\\
& = \Phi \left(\frac{1}{x} \int_{0}^{x} \frac{1}{t} \sqrt{\frac{t}{1 - t}} f\Big(\log \frac{1 - t}{t}\Big) \frac{1}{1 - t}dt\right)(u)\\
& = \Phi \left(\frac{1}{x} \int_{\log \frac{1 - x}{x}}^{\infty} e^{-\frac{s}{2}} f(s) ds\right)(u)\\
& = \frac{e^{\frac{u}{2}}}{1 + e^{u}} (1 + e^{u}) \int_{u}^{\infty} e^{-\frac{s}{2}} f(s) ds\\
& = \int_{u}^{\infty} e^{\frac{u - s}{2}} f(s) ds.
\end{align*}
In summary, $\Phi \circ C \circ \Phi^{-1}$ can be written as a convolution operator 
$$(\Phi \circ C \circ \Phi^{-1} f)(x) = (G \ast f)(x) := \int_{-\infty}^{\infty} G(x - t) f(t) dt,$$
where 
$$G(t) := \chi_{(-\infty, 0)}(t) e^{\frac{t}{2}}.$$
We now convert the Ces\`{a}ro operator  $C$ on $L^2(0, 1)$ into a multiplication operator on $L^2(-\infty, \infty)$ via the Fourier transform 
\begin{equation}\label{FFFFTTTT}
 (\mathscr{F} g)(s) := \frac{1}{\sqrt{2 \pi}} \int_{-\infty}^{\infty} g(x) e^{-i x s} dx, \quad g \in L^2(-\infty, \infty),
 \end{equation}
  on $L^2(-\infty, \infty)$ as follows. Standard theory of Fourier transforms (see \cite[p.~100]{MR2449250}  or \cite[Ch.~11]{MR4545809}) says that $\mathscr{F}$ is an isometric isomorphism of $L^2(-\infty, \infty)$ onto itself. Moreover, for any $f \in L^2(-\infty, \infty)$ we have 
 \begin{align*}
 \mathscr{F} ((\mathscr{F}^{-1} f) \ast G)(x) & = \sqrt{2 \pi}\, (f  \cdot \mathscr{F} G)(x)\\
 & = \sqrt{2 \pi}\, f(x) \cdot \frac{1}{\sqrt{2 \pi}} \int_{-\infty}^{0} e^{\frac{t}{2} - i x t} dt\\
 & = f(x) \cdot \frac{1}{\frac{1}{2} - i x}.
 \end{align*}
Putting this all together says that 
\begin{equation}\label{sd8f8sdfhhhMMMMMmFF}
(\mathscr{F} \circ \Phi) \circ C \circ (\mathscr{F} \circ \Phi)^{-1} = M_{(\frac{1}{2} - i x)^{-1}},
\end{equation} which yields the following result. 

\begin{Theorem}
The Ces\`{a}ro operator on $L^2(0, 1)$ is unitarily equivalent to the multiplication operator 
$$(M_{(\frac{1}{2} - i x)^{-1}} g)(x) = \frac{1}{\frac{1}{2} - i x} \cdot g(x)$$ on $L^2(-\infty, \infty)$. 
\end{Theorem}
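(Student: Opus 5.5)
The plan is to exhibit the unitary $U := \mathscr{F} \circ \Phi$ from $L^2(0,1)$ onto $L^2(-\infty,\infty)$ and check that $U C U^{-1} = M_{(\frac12 - ix)^{-1}}$, which is essentially the content of \eqref{sd8f8sdfhhhMMMMMmFF}. There are three steps, carried out in order.

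\textbf{Step 1: $U$ is unitary.} By Proposition \ref{sdfsdf88}, $\Phi$ is an isometric isomorphism of $L^2(0,1)$ onto $L^2(-\infty,\infty)$. By Plancherel's theorem, $\mathscr{F}$ is a unitary operator on $L^2(-\infty,\infty)$. Hence $U$ is a composition of surjective isometries, and so it is unitary.

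\textbf{Step 2: conjugation by $\Phi$ gives a convolution.} I will verify that $\Phi C \Phi^{-1} g = G \ast g$ with $G(t) = \chi_{(-\infty,0)}(t) e^{t/2}$. The calculation before the theorem does this using the substitution $s = \log\frac{1-t}{t}$. Two points need care.
\begin{enumerate}
\item The $x$-dependent factor $1/x$ coming from $C$ must cancel against the factor $1+e^u$ in $\Phi$.
\item The computation is first done for $g$ in a dense class, say continuous functions with compact support. For such $g$, Fubini and the change of variables are clearly justified. Both sides are bounded operators: the left side because $C$ is bounded by the Schur-test theorem, the right side by Young's inequality, since $G \in L^1$ with $\|G\|_1 = 2$. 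The identity then extends to all of $L^2$ by density.
\end{enumerate}

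\textbf{Step 3: Fourier transform the convolution.} I will use the convolution theorem $\mathscr{F}(G\ast h) = \sqrt{2\pi}\,(\mathscr{F}G)(\mathscr{F}h)$, valid for $G \in L^1$ and $h \in L^2$. A direct integral gives
$$(\mathscr{F}G)(x) = \frac{1}{\sqrt{2\pi}} \int_{-\infty}^0 e^{t(\frac12 - ix)}\,dt = \frac{1}{\sqrt{2\pi}}\cdot\frac{1}{\frac12 - ix}.$$
Setting $h = \mathscr{F}^{-1} f$, this yields $\mathscr{F}(G \ast \mathscr{F}^{-1} f) = (\frac12 - ix)^{-1} f$. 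Composing with Step 2 gives $UCU^{-1} = M_{(\frac12-ix)^{-1}}$. This multiplication operator is bounded because $|(\frac12 - ix)^{-1}| \leq 2$.

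The main obstacle I expect is rigor rather than conception. One issue is justifying the convolution theorem for an $L^1$ kernel acting on $L^2$. I would handle this first on $L^1\cap L^2$, where Fubini applies directly, and then extend by continuity. The other issue is the density and boundedness argument in Step 2. Everything else is the explicit computation already displayed in the text.
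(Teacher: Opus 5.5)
Your proposal is correct and takes the same route as the paper: conjugate $C$ by $\Phi$ to get convolution with $G(t)=\chi_{(-\infty,0)}(t)e^{t/2}$, then apply $\mathscr{F}$ and the convolution theorem to get multiplication by $(\tfrac12-ix)^{-1}$; your density, Young's inequality, and $L^1\cap L^2$ arguments supply the justifications the paper leaves implicit. The density step in Step 2 is not actually needed. For every $g\in L^2(-\infty,\infty)$ and fixed $x$, the integral defining $(C\Phi^{-1}g)(x)$ converges absolutely, so the substitution $s=\log\frac{1-t}{t}$ is already valid pointwise.
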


Now use the fact that the norm of a multiplication operator is the (essential) supremum norm of its symbol and its spectrum is the (essential) range of its symbol \cite[Ch.~8]{MR4545809}  to conclude that 
$$\|M_{(\frac{1}{2} - i x)^{-1}}\| = \sup_{x \in (-\infty, \infty)} \Big|\frac{1}{\frac{1}{2} - i x}\Big| = 2$$
and 
$$ \sigma(M_{(\frac{1}{2} - i x)^{-1}})  = \operatorname{clos}\Big\{\frac{1}{\frac{1}{2} - i x}: x \in (-\infty, \infty)\Big\} = \partial D(1, 1).$$
The unitary equivalence of the operators $C$ and $M_{(\frac{1}{2} - i x)^{-1}}$ via $\mathscr{F} \circ \Phi$ implies the following. 

\begin{Theorem}\label{boundedCCC}
For the Ces\`{a}ro operator $C$ on $L^2(0, 1)$ we have 
$\|C\| = 2$ and  $\sigma(C) = \partial D(1, 1).$
\end{Theorem}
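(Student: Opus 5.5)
The plan is to transfer everything through the unitary equivalence \eqref{sd8f8sdfhhhMMMMMmFF}. Since $U := \mathscr{F} \circ \Phi$ is an isometric isomorphism of $L^2(0,1)$ onto $L^2(-\infty, \infty)$ (by Proposition \ref{sdfsdf88} and Plancherel), we have $C = U^{-1} M_{\psi} U$ with
$$\psi(x) := \frac{1}{\frac{1}{2} - i x}.$$
Unitary similarity preserves operator norm and spectrum, because $\lambda I - C = U^{-1}(\lambda I - M_\psi)U$ is invertible exactly when $\lambda I - M_\psi$ is. So it suffices to compute $\|M_\psi\|$ and $\sigma(M_\psi)$ on $L^2(-\infty,\infty)$ with Lebesgue measure.

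For the norm, I will use the standard fact that $\|M_\psi\| = \|\psi\|_\infty$ (essential supremum). Here $|\psi(x)| = (\tfrac14 + x^2)^{-1/2}$ is continuous with maximum $2$ at $x = 0$. Because $\psi$ is continuous, its essential supremum equals its supremum. One direction is the trivial bound $\|M_\psi g\| \leq 2\|g\|$. For the other, test on $g = \chi_{(-\epsilon,\epsilon)}$; this gives $\|M_\psi\| \geq \inf_{|x|<\epsilon}|\psi(x)| \to 2$. Hence $\|C\| = 2$, consistent with the Schur bound obtained earlier.

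For the spectrum, I will use that $\sigma(M_\psi)$ is the essential range of $\psi$, which for continuous $\psi$ is the closure of its range. It remains to identify $\psi(\mathbb{R})$. Write $w = \psi(x)$, so $1/w = \tfrac12 - ix$ runs over the vertical line $\operatorname{Re} z = \tfrac12$. The inversion $z \mapsto 1/z$ sends this line onto the circle through $0$ and $2$ symmetric about the real axis, namely $\partial D(1,1)$, minus the point $0$. Concretely, one checks
$$|w - 1|^2 = \frac{|\tfrac12 + ix|^2}{|\tfrac12 - ix|^2} = 1,$$
and that $\psi$ hits every point of the circle except $0$. Then $0$ is recovered as a limit as $x \to \pm\infty$, so $\overline{\psi(\mathbb{R})} = \partial D(1,1)$.

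To make this self-contained, I would justify the essential-range fact in two steps. If $\lambda \notin \overline{\psi(\mathbb{R})}$, then $1/(\lambda - \psi)$ is bounded, and multiplication by it inverts $\lambda I - M_\psi$. If $\lambda = \psi(x_0)$ (or the limit value $0$), then normalized indicators of small intervals around $x_0$ (or far-out intervals) are approximate eigenvectors, so $\lambda I - M_\psi$ is not bounded below. The latter also covers $\lambda = 0$, using that $\psi(x) \to 0$. I expect no real obstacle; the only points needing care are the endpoint $\lambda = 0$ and the geometric identification of the image circle, both of which are elementary.
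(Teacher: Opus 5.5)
Your proposal is correct and follows the paper's argument. You conjugate $C$ by the unitary $\mathscr{F}\circ\Phi$ to get the multiplication operator $M_{(\frac{1}{2}-ix)^{-1}}$ on $L^2(-\infty,\infty)$, and read off $\|C\|=2$ and $\sigma(C)=\partial D(1,1)$ from the supremum and the closure of the range of the symbol. The only difference is that you prove by hand the standard multiplication-operator facts and the inversion of the line $\operatorname{Re} z=\tfrac12$ onto $\partial D(1,1)\setminus\{0\}$; the paper just cites these.
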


The known normality of  multiplication operators on $L^2(-\infty, \infty)$ \cite[Ch.~8]{MR4545809}, along with the above discussion yields the normality of $C$.

\begin{Theorem}
The Ces\`{a}ro operator $C$ on $L^2(0, 1)$ is normal.
\end{Theorem}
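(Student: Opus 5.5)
The plan is to transport normality through the unitary equivalence in \eqref{sd8f8sdfhhhMMMMMmFF}. Set $U := \mathscr{F} \circ \Phi$. By Proposition \ref{sdfsdf88}, $\Phi$ is an isometric isomorphism of $L^2(0,1)$ onto $L^2(-\infty,\infty)$, and $\mathscr{F}$ is unitary on $L^2(-\infty,\infty)$. Hence $U$ is unitary, $U^{-1} = U^{*}$, and
$$C = U^{*} M_{\psi} U, \qquad \psi(x) := \frac{1}{\frac{1}{2} - i x}.$$

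The first step is to identify the adjoint of the multiplication operator. Since $\psi$ is bounded with $|\psi| \leq 2$, the identity $\langle M_{\psi} g, h\rangle = \int \psi g \overline{h} = \langle g, \overline{\psi} h\rangle$ shows that
$$M_{\psi}^{*} = M_{\overline{\psi}}.$$
Consequently
$$M_{\psi}^{*} M_{\psi} = M_{|\psi|^2} = M_{\psi} M_{\psi}^{*},$$
because pointwise multiplication of functions is commutative. So $M_{\psi}$ is normal.

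The second step is the conjugation. We have $C^{*} = (U^{*} M_{\psi} U)^{*} = U^{*} M_{\psi}^{*} U$. Using $U U^{*} = I$,
$$C^{*}C = U^{*} M_{\psi}^{*} U U^{*} M_{\psi} U = U^{*} M_{\psi}^{*} M_{\psi} U,$$
and in the same way $C C^{*} = U^{*} M_{\psi} M_{\psi}^{*} U$. Normality of $M_{\psi}$ then gives $C^{*}C = CC^{*}$.

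There is no serious obstacle. The one point to check is that $U$ is genuinely unitary, that is, both isometric and surjective, so that $U^{-1} = U^{*}$. This follows from Proposition \ref{sdfsdf88} together with Plancherel's theorem.

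As a sanity check, one could also verify directly that $C^{*}C - CC^{*} = 0$ using the kernel of $C$ from \eqref{kkkKKX} and the adjoint kernel from \eqref{k*}, but the unitary-equivalence argument makes this unnecessary.
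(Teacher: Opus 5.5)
Your proposal is correct and follows the paper's argument. The paper also deduces normality of $C$ from its unitary equivalence, via $\mathscr{F}\circ\Phi$, with the normal multiplication operator $M_{(\frac{1}{2} - i x)^{-1}}$; you have written out the routine checks ($M_{\psi}^{*} = M_{\overline{\psi}}$ and conjugation by a unitary), and the kernel computation you mention as a sanity check is the alternative proof the paper records right after the theorem.
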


An alternative proof of the normality of $C$, inspired by a similar calculation  from \cite{MR187085} involving the Hardy averaging operator (recall \eqref{Haaa}), comes from the fact that the integral kernel for the operator $C C^{*}$ (recall the kernels $k$ and $k^{*}$ from \eqref{kkkKKX} and \eqref{k*}) is 
\begin{align*}
\int_{0}^{1} k(x, u) k^{*}(u, t) du & =  \int_{0}^{\min(x, t)} \frac{1}{x}  \frac{1}{1 - u}  \frac{1}{1 - u} \frac{1}{t} du\\
& = \frac{1}{t x} \Big(\frac{1}{1 - \min(x, t)} - 1\Big),
\end{align*}
while the integral kernel for the operator $C^{*} C$ is 
\begin{align*}
\int_{0}^{1} k^{*}(x, u) k(u, t)du &= \int_{\max(x, t)}^{1} \frac{1}{1 - x}  \frac{1}{u} \frac{1}{u} \frac{1}{1 - t} du\\
& = \frac{1}{(1 - t)(1 - x)} \Big(\frac{1}{\max(x, t)} - 1\Big).
\end{align*}
By considering the cases where $x \geq t$ and $t \geq x$ separately, one can show these two expressions are equal and thus $C C^{*} = C^{*} C$. The normality of $C$ can also be verified by computing $C^{*}C$ and $C C^{*}$ on the monomials $\{x^n: n \geq 0\}$ and using identities involving the incomplete beta function.

\begin{Remark}\label{RRR}
Another curiosity arising from the above analysis is the formula 
$$(I - C)^{-1} = I - C^{*}$$
which can be seen by first observing that 
\begin{align*}
(I - M_{(\frac{1}{2} - i x)^{-1}})(I - M_{(\frac{1}{2} - i x)^{-1}})^{*} & = M_{\frac{-i x - 1/2}{-i x + 1/2}}  M_{\frac{-i x - 1/2}{-i x + 1/2}}^{*}\\
& =  M_{\frac{-i x - 1/2}{-i x + 1/2}}  M_{\frac{i x - 1/2}{i x + 1/2}}\\
& = I.
\end{align*}
Now conjugate the above identity  with the intertwining map  $\mathscr{F} \circ \Phi$ discussed earlier  to obtain
$$(I - C) (I - C^{*}) = I,$$
and, in a similar way, $(I - C^{*})(I - C) = I$. 

Since 
$$M_{(\frac{1}{2} - i x)^{-1}} M_{(\frac{1}{2} - i x)^{-1}}^{*} = M_{(\frac{1}{2} - i x)^{-1}} + M_{(\frac{1}{2} - i x)^{-1}}^{*},$$
it follows, again  via  conjugation by $\mathscr{F} \circ \Phi$, that 
$$C C^{*} = C + C^{*}$$
and likewise $C^{*}C = C + C^{*}$, again confirming the normality of $C$. 
\end{Remark}

As it turns out, the point spectrum $\sigma_{p}(C)$, i.e., the set of eigenvalues of $C$, is not very interesting. Neither is $\sigma_{p}(C^*)$. 

\begin{Proposition}\label{eigen2}
For the Ces\`{a}ro operator $C$ on $L^2(0, 1)$ we have 
$$\sigma_{p}(C) = \sigma_{p}(C^{*}) = \varnothing.$$
\end{Proposition}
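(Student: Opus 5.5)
The plan is to use the unitary equivalence \eqref{sd8f8sdfhhhMMMMMmFF}. Unitary equivalence preserves eigenvalues, so $\sigma_p(C)=\sigma_p(M_\phi)$ with $\phi(x)=(\tfrac12-ix)^{-1}$ acting on $L^2(-\infty,\infty)$. The same conjugation applied to adjoints gives $\sigma_p(C^*)=\sigma_p(M_\phi^*)=\sigma_p(M_{\overline{\phi}})$. It therefore suffices to show that a multiplication operator $M_\psi$ on $L^2(-\infty,\infty)$ has no eigenvalues whenever every level set $\{x:\psi(x)=\lambda\}$ has Lebesgue measure zero.

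To see this, suppose $M_\psi g=\lambda g$ with $g\in L^2$. Then $(\psi(x)-\lambda)g(x)=0$ for almost every $x$, so $g=0$ almost everywhere off the set $\{\psi=\lambda\}$. Since that set is null, $g=0$ in $L^2$, and $\lambda$ is not an eigenvalue.

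Both $\phi$ and $\overline{\phi}$ are injective on $\mathbb{R}$, because $x\mapsto \tfrac12-ix$ is injective and inversion is injective. Hence each level set contains at most one point and is null, which gives both claims.

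As an alternative, or as a direct sanity check, one could solve $Cf=\lambda f$ directly. Differentiating $xCf=\int_0^x f/(1-t)\,dt$ gives an ODE whose solutions are $f(x)=c\,x^{a}(1-x)^{b}$ with $a=1/\lambda-1$ and $b=-1/\lambda$. The $L^2$ conditions then require $\operatorname{Re} a>-\tfrac12$ and $\operatorname{Re} b>-\tfrac12$. These are incompatible, because $a+b=-1$ forces $\operatorname{Re} a+\operatorname{Re} b=-1$.

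I do not expect a real obstacle here. The only point needing care is the adjoint step: I need that the unitary $U=\mathscr{F}\circ\Phi$ satisfies $UC^*U^{-1}=(UCU^{-1})^*$, and that $M_\phi^*=M_{\overline{\phi}}$. Both are standard.
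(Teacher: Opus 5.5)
Your proposal is correct and is essentially the paper's proof. The paper conjugates by $\mathscr{F}\circ\Phi$ to reduce the claim to the multiplication operators $M_{(\frac{1}{2}-ix)^{-1}}$ and $M^{*}_{(\frac{1}{2}-ix)^{-1}} = M_{(\frac{1}{2}+ix)^{-1}}$; you additionally supply the justification the paper leaves implicit, namely that the symbols are injective and so have null level sets. Your direct ODE check, giving $f = c\,x^{1/\lambda - 1}(1-x)^{-1/\lambda}$ with incompatible $L^2$ conditions at $0$ and $1$, is also a correct independent confirmation for $C$; the case $\lambda = 0$, where you divide by $\lambda$, is trivial since $Cf=0$ forces $f=0$ a.e.
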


\begin{proof}
The multiplication operators 
$$M_{(\frac{1}{2} - i x)^{-1}} \; \mbox{and} \; M^{*}_{(\frac{1}{2} - i x)^{-1}} = M_{(\frac{1}{2} + i x)^{-1}}$$ on $L^2(-\infty, \infty)$ have no eigenvalues. Conjugating by $\mathscr{F} \circ \Phi$ shows that $C$ and $C^{*}$ have no eigenvalues either. 
\end{proof}

\section{Spectral Representation}

Recall that a bounded  operator $T$ on a (separable) Hilbert space $\mathcal{H}$  is {\em cyclic} if there is an $\vec{x} \in \mathcal{H}$ such that 
$$\overline{\operatorname{span}}\{T^{n} \vec{x}: n \geq 0\} = \mathcal{H}.$$
In other words, the closed linear span of the orbit of $\vec{x}$ under $T$ is all of $\mathcal{H}$. 
The vector $\vec{x}$ is called a {\em cyclic vector} for $T$. 

\begin{Proposition}\label{cvbNNN}
The Cesaro operator $C$ on $L^2(0, 1)$ is cyclic with cyclic vector $\chi_{[0, 1]}$.
\end{Proposition}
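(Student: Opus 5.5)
The plan is to transport the question through the unitary $\mathscr{F}\circ\Phi$ from \eqref{sd8f8sdfhhhMMMMMmFF}. Since $(\mathscr{F}\circ\Phi)\, C^n = M^n_{(\frac12 - ix)^{-1}}\,(\mathscr{F}\circ\Phi)$, it suffices to show the following. If $\phi(x) := (\tfrac12 - ix)^{-1}$ and $g := \mathscr{F}\Phi\chi_{[0,1]}$, then the polynomials $p(\phi)\, g$ are dense in $L^2(-\infty,\infty)$.

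\textbf{Step 1: compute $g$.} By Proposition \ref{sdfsdf88},
$$(\Phi\chi_{[0,1]})(u) = \frac{e^{u/2}}{1+e^u} = \frac{1}{2\cosh(u/2)}.$$
The classical transform of the hyperbolic secant then gives $g(s) = c\,\operatorname{sech}(\pi s)$ for some constant $c > 0$. In particular $g$ never vanishes, and $|g(s)|^2 \asymp e^{-2\pi|s|}$.

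\textbf{Step 2: move to the unit circle.} Put $z = \phi(x) - 1 = \dfrac{\frac12 + ix}{\frac12 - ix}$. The map $x \mapsto z$ is a Cayley-type diffeomorphism of $\mathbb{R}$ onto $\partial\D\setminus\{-1\}$, and polynomials in $\phi$ are exactly polynomials in $z$. Let $\mu$ be the push-forward of $|g(x)|^2\,dx$ to the circle. Then $h \mapsto \big(h/g\big)\circ x(z)$ identifies $L^2(-\infty,\infty)$ unitarily with $L^2(\mu)$; this uses that $g$ has no zeros. Under this identification $p(\phi)g$ corresponds to the analytic polynomial $p(1+z)$. So the claim reduces to density of analytic polynomials in $L^2(\mu)$.

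\textbf{Step 3: Szeg\H{o}'s theorem (the main obstacle).} Polynomials are dense in $L^2(\mu)$ if and only if $\int_{\partial\D} \log(d\mu/dm)\, dm = -\infty$, where $m$ is normalized arc length. To check this, note that under the change of variables, $dm$ corresponds to $\dfrac{dx}{2\pi(\frac14 + x^2)}$. Hence
$$\frac{d\mu}{dm} = 2\pi\,|g(x)|^2\Big(\tfrac14 + x^2\Big),$$
whose logarithm behaves like $-2\pi|x| + O(\log(1+|x|))$. The Szeg\H{o} integral therefore contains
$$-\int_{-\infty}^{\infty} \frac{2\pi|x|}{2\pi(\frac14+x^2)}\,dx = -\infty,$$
while the logarithmic correction is integrable against $dx/(\frac14+x^2)$. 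So the integral diverges to $-\infty$, and the polynomials are dense.

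Pulling back through the identifications of Steps 1 and 2 shows that $\{C^n\chi_{[0,1]} : n \geq 0\}$ has dense span. The delicate points are the exact form of $g$ and verifying the Jacobian of the circle map, since the whole argument hinges on the exponential decay of $g$ beating the Poisson weight. If one prefers to avoid Szeg\H{o}'s theorem, an alternative is the following. Take $h \perp p(\phi)g$ for all $p$; then the measure $g\bar h\,dx$ pushed to the circle has vanishing nonnegative Fourier coefficients. The F.~and M.~Riesz theorem then forces its density to be the boundary value of an $H^1$ function, and the super-exponential smallness of $g$ near $z=-1$ makes the $\log$-integrability of that density fail unless $h = 0$.
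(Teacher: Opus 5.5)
Your proof is correct, and it takes a genuinely different route from the paper. The paper does not use the spectral model for this proposition. It imports the Kriete--Trutt theorem that $\chi_{\D}$ is cyclic for the classical Ces\`{a}ro operator $\mathscr{C}$ on $H^2$, and notes that $\mathscr{C}^n\chi_{\D}$ restricted to $(0,1)$ is exactly $C^n\chi_{[0,1]}$, since both operators integrate along $[0,x]$. It then uses the Fej\'{e}r--Riesz inequality $\int_{-1}^{1}|F(x)|^2\,dx \leq \tfrac12\int_0^{2\pi}|F(e^{i\theta})|^2\,d\theta$ to turn $H^2$-approximations $p(\mathscr{C})\chi_{\D}\approx q$ into $L^2(0,1)$-approximations $p(C)\chi_{[0,1]}\approx q$, and finishes with density of polynomials in $L^2(0,1)$. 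That argument is very short, and the same transference shows that the restriction to $(0,1)$ of any cyclic vector of $\mathscr{C}$ is cyclic for $C$. However, all its depth is borrowed from a theorem about a different operator. You instead stay on $L^2(0,1)$, use only the already-proved unitary equivalence of $C$ with $M_{(\frac12-ix)^{-1}}$, and reduce everything to Szeg\H{o}'s theorem. The points you flag as delicate all check out: $c=\sqrt{\pi/2}$, $dm = dx/(2\pi(\frac14+x^2))$, $d\mu/dm$ is bounded so $\log^{+}$ causes no trouble, and $\int|x|/(\frac14+x^2)\,dx=\infty$. Your route buys three things. First, it explains why the result holds: the spectral density vanishes roughly like $e^{-2\pi/|\theta-\pi|}$ at $z=-1$ (i.e.\ at $0\in\partial D(1,1)$), which is too fast to be log-integrable. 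Second, it identifies cyclicity of $C$ with $P^2(\mu)=L^2(\mu)$ for a translate of the very measure in the paper's spectral representation. Third, run with an arbitrary $f$ in place of $\chi_{[0,1]}$, it answers the question the paper poses right after this proposition: $f$ is cyclic for $C$ if and only if $(\mathscr{F}\Phi f)(x)\neq 0$ for almost every $x$ and $\int_{-\infty}^{\infty}\log|(\mathscr{F}\Phi f)(x)|\,\frac{dx}{1+x^2}=-\infty$. Here the positive part of the logarithm is automatically integrable, since the corresponding density has $dm$-integral $\|f\|^2$.
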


\begin{proof}
It is known \cite[p.~203]{MR350489} that the classical Ces\`{a}ro operator $\mathscr{C}$ is cyclic on the Hardy space $H^2$ with cyclic vector $\chi_{\D}$. Therefore, given any $\varepsilon > 0$ and $q \in \C[z]$, there is a $p \in \C[z]$ such that 
\begin{equation}\label{FRRee}
\int_{0}^{2 \pi} |(p(\mathscr{C}) \chi_{\D})(e^{i \theta}) - q(e^{i \theta})|^2 \frac{d \theta}{2 \pi} < \varepsilon.
\end{equation}
The above integral represents the square of the $H^2$ norm of $p(\mathscr{C})\chi_{\D} - q$ \cite[p.~21]{Duren}. 

We pause for a moment to remark that a calculation involving the  polylogarithm function shows  that for any integer $n \geq 0$, 
$$(\mathscr{C}^n \chi_{\D})(z) = \sum_{k = 1}^{\infty} \frac{z^k}{k^n}, \quad z \in \D,$$
and 
$$(C^n \chi_{[0, 1]})(x) = \sum_{k = 1}^{\infty} \frac{x^k}{k^n}, \quad 0 < x < 1.$$
Thus,
\begin{equation}\label{poly}
(\mathscr{C}^n \chi_{\D})(x) = (C^n \chi_{[0, 1]})(x), \quad 0 < x < 1,
\end{equation}
for all integers $n \geq 0$ and so 
$$p(\mathscr{C} \chi_{\D})(x) = p(C \chi_{[0, 1]})(x)$$ for all $p \in \C[z]$.

The Fej\'{e}r--Riesz inequality \cite[p.~46]{Duren} says that 
$$\int_{-1}^{1} |g(x)|^2 dx \leq \tfrac{1}{2} \int_{0}^{2 \pi} |g(e^{i \theta})|^2 d \theta, \quad g \in H^2,$$ which, applied to \eqref{FRRee} and \eqref{poly}, yields 
\begin{align*}
\int_{0}^{1} |(p(C) \chi_{[0, 1]})(x) - q(x)|^2 dx & \leq \pi \int_{0}^{2 \pi}  |(p(\mathscr{C}) \chi_{\D})(e^{i \theta}) - q(e^{i \theta})|^2 \frac{d \theta}{2 \pi}\\
& < \pi \varepsilon.
\end{align*}
This implies that 
$q \in \overline{\operatorname{span}}\{C^{n} \chi_{[0, 1]}: n \geq 0\}$ for any $q \in \C[x]$.
The known density of the polynomials in $L^2(0, 1)$ completes the proof. 
\end{proof}

\begin{Question}
What are other cyclic vectors for $C$ on $L^2(0, 1)$? Can they be characterized?
\end{Question}

Since $C$ is both normal and cyclic, the spectral theorem says there is a finite positive Borel measure $\mu$ on $\sigma(C) = \partial D(1, 1)$ such that $C$ is unitarily equivalent to the multiplication operator $M_z f = z f$ on $L^2(\mu)$  \cite[p.~275]{ConwayFA}. What is $\mu$? What is the  intertwining isometric isomorphism between $L^2(0, 1)$ and $L^2(\mu)$ that yields this unitary equivalence? We now answer these questions with the following discussion. 

Apply the isometric isomorphism $\Phi: L^2(0, 1) \to L^2(-\infty, \infty)$ from Proposition \ref{sdfsdf88} to the cyclic vector $\chi_{[0, 1]}$ for $C$ (see the proof of Proposition \ref{cvbNNN}) to get
$$(\Phi \chi_{[0, 1]})(u) = \frac{1}{2 \cosh(\frac{u}{2})}, \quad -\infty < u < \infty.$$
Taking Fourier transforms of the previous equation yields 
$$(\mathscr{F} \circ \Phi \chi_{[0, 1]})(x) = \sqrt{\frac{\pi}{2}} \operatorname{sech} (\pi x), \quad -\infty < x < \infty.$$
Now define a measure $\nu$ on $(-\infty, \infty)$ by 
$$d \nu(x) = |(\mathscr{F} \circ \Phi \chi_{[0, 1]})(x)|^2 dx = \tfrac{\pi}{2} \operatorname{sech}^2(\pi x) dx$$
(see Figure \ref{Fig_Sec})
\begin{figure}
\begin{center}
 \includegraphics[width=.4\textwidth]{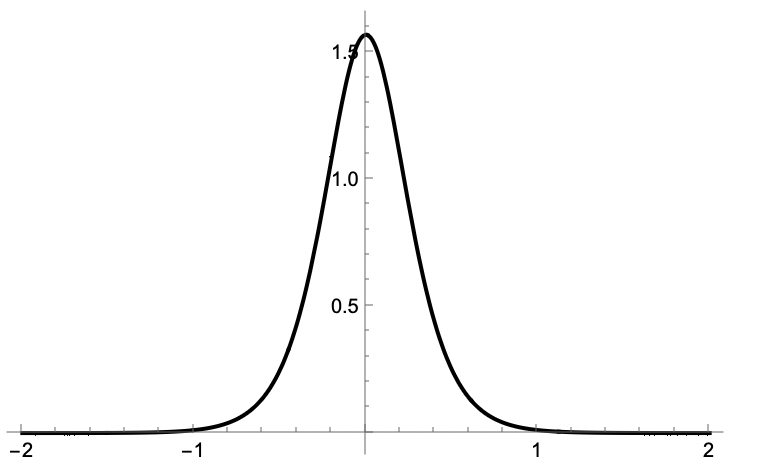}
 \caption{The graph of $\frac{\pi}{2} \operatorname{sech}^2(\pi x)$. }
 \label{Fig_Sec}
 \end{center}
\end{figure}
and the push forward measure $\mu$ on $\partial D(1, 1)$ by  
\begin{equation}\label{uuWwWW}
\int_{\partial D(1, 1)} g(z) d \mu(z) = \int_{-\infty}^{\infty} (g \circ m)(x) d \nu(x), \quad g \in C(\partial D(1, 1)).
\end{equation}
In the above, 
$$m(x) := \frac{1}{\frac{1}{2}- i x}, \quad -\infty < x < \infty,$$
and  maps $(-\infty, \infty)$ bijectively onto $\partial D(1, 1) \setminus\{0\}$. The change of variables $z = m(x)$ will show that 
\begin{equation}\label{muuu}
d \mu(z) = \frac{\pi}{2} \frac{\operatorname{sech}^{2}(\pi m^{-1}(z))}{|z|^2} |dz|, \quad z \in \partial D(1, 1),
\end{equation}
where $|dz|$ is arc length measure on $\partial D(1, 1)$. Parameterizing $\partial D(1, 1)$ by 
$z = 1 + e^{i \theta}, -\pi \leq \theta \leq \pi,$
a calculation shows that 
$$\frac{d \mu(1 + e^{i \theta})}{|d z|} =  \frac{\pi}{2} \frac{ \csc ^2\left(\frac{\pi }{1+e^{i \theta}}\right)}{(2 + 2 \cos \theta)}, \quad - \pi < \theta < \pi,$$
which is plotted in Figure \ref{Fig_mu}.
\begin{figure}
\begin{center}
 \includegraphics[width=.4\textwidth]{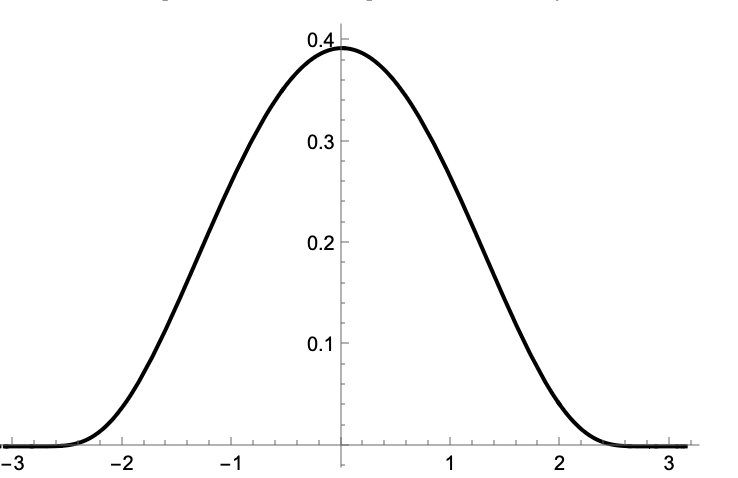}
 \caption{The plot of $\frac{d \mu(1 + e^{i \theta})}{|d z|}  = \frac{\pi  \csc ^2\left(\frac{\pi }{1+e^{i \theta}}\right)}{2 (2 \cos \theta +2)}$ for $-\pi \leq \theta \leq \pi$.}
 \label{Fig_mu}
 \end{center}
\end{figure}

We can now state our spectral representation of $C$. 

\begin{Theorem}
The Ces\`{a}ro operator $C$ on $L^2(0, 1)$ is unitarily equivalent to $M_{z} f = z f$ on $L^2(\mu)$, where $\mu$ is the measure from \eqref{muuu}.
\end{Theorem}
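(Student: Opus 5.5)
We will construct the intertwining isometric isomorphism explicitly as a composition of three maps. The first is $U_1 := \mathscr{F} \circ \Phi : L^2(0,1) \to L^2(-\infty,\infty)$. By \eqref{sd8f8sdfhhhMMMMMmFF} it satisfies $U_1 C U_1^{-1} = M_m$, where $m(x) = (\tfrac{1}{2} - ix)^{-1}$.

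The second is a weighted map $W : L^2(-\infty,\infty) \to L^2(\nu)$. Write $h := U_1 \chi_{[0,1]} = \sqrt{\pi/2}\,\operatorname{sech}(\pi x)$, and note that $h$ vanishes nowhere. Define $W g := g/h$. Since $d\nu = |h|^2\,dx$, we have $\|g/h\|_{L^2(\nu)} = \|g\|_{L^2(dx)}$. The inverse is $\psi \mapsto h\psi$, so $W$ is a unitary. Multiplication operators commute, hence $W M_m W^{-1} = M_m$, now acting on $L^2(\nu)$.

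The third is the pushforward map $V : L^2(\mu) \to L^2(\nu)$, $V\psi := \psi \circ m$. By the defining identity \eqref{uuWwWW}, $\int |\psi\circ m|^2\,d\nu = \int |\psi|^2\,d\mu$. We first have this for continuous $\psi$, and it extends to all Borel $\psi$ by the usual monotone class argument. So $V$ is an isometry.

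We must also check two things about $V$.
\begin{itemize}
\item \emph{The point $0$.} Since $m$ misses $0$, we need $\mu(\{0\}) = 0$. Indeed $\mu(\{0\}) = \nu(m^{-1}(\{0\})) = \nu(\varnothing) = 0$.
\item \emph{Surjectivity.} Because $m$ is a homeomorphism of $(-\infty,\infty)$ onto $\partial D(1,1)\setminus\{0\}$, every $\phi \in L^2(\nu)$ can be written as $\phi = (\phi\circ m^{-1})\circ m$, with $\phi\circ m^{-1}$ Borel on $\partial D(1,1)$ and defined $\mu$-a.e.
\end{itemize}
The intertwining is then immediate from the pointwise identity
$$V(z\psi)(x) = m(x)\,\psi(m(x)) = \bigl(M_m V\psi\bigr)(x),$$
which gives $V M_z V^{-1} = M_m$ on $L^2(\nu)$.

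Combining the three maps, $U := V^{-1} W \mathscr{F}\Phi$ is a unitary from $L^2(0,1)$ onto $L^2(\mu)$ with $U C U^{-1} = M_z$. As a check, $U\chi_{[0,1]} = 1$, matching the cyclic-vector form of the spectral theorem.

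The only step that needs care is identifying $\mu$ with the explicit density \eqref{muuu}. We do this with the change of variables $z = m(x)$ on the circle. We have $m'(x) = i\,m(x)^2$, so $|dz| = |m(x)|^2\,dx$, that is, $dx = |dz|/|z|^2$. Substituting gives $d\mu(z) = \tfrac{\pi}{2}\operatorname{sech}^2(\pi m^{-1}(z))\,|dz|/|z|^2$, which is exactly \eqref{muuu}.

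The main (minor) obstacle is the remaining measure-theoretic bookkeeping. We must confirm that the pushforward identity extends from $C(\partial D(1,1))$ to $L^2$ functions. We must also confirm that $\mu$ is finite: this holds because $\mu(\partial D(1,1)) = \nu(\mathbb{R}) = \|\chi_{[0,1]}\|^2 = 1$.
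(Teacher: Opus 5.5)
Your proposal is correct and follows the paper's argument: the composite $V^{-1}W\mathscr{F}\Phi$ is exactly the paper's map $\Psi f = \bigl((\mathscr{F}\Phi f)/(\mathscr{F}\Phi\chi_{[0,1]})\bigr)\circ m^{-1}$, which the paper shows is unitary via \eqref{uuWwWW} and intertwining via \eqref{sd8f8sdfhhhMMMMMmFF}. Splitting it into three factors just spells out details the paper leaves implicit: that $\operatorname{sech}$ never vanishes, that $\mu(\{0\})=0$, that the pushforward map is surjective, and the computation $|dz| = |m(x)|^2\,dx$ behind \eqref{muuu}.
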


\begin{proof}
Let $W := \mathscr{F} \circ \Phi$ and define 
$\Psi: L^2(0, 1) \to L^2(\mu)$ by 
$$(\Psi f)(z) := \frac{(W f)(m^{-1}(z))}{(W \chi_{[0, 1]})(m^{-1}(z))} = \frac{(W f)(m^{-1}(z))}{\sqrt{\frac{\pi}{2}} \operatorname{sech}(\pi m^{-1}(z))}\quad z \in \partial D(1, 1).$$
Then, by \eqref{uuWwWW},  $\Psi$ is an isometric isomorphism, $\Psi \chi_{[0, 1]}  = \chi_{\partial D(1, 1)}$, and, for $f \in L^2(0, 1)$ and  $\mu$-almost every $z \in \partial D(1, 1)$, \eqref{sd8f8sdfhhhMMMMMmFF} says that 
\begin{align*}
(\Psi C f)(z) & = \frac{(\mathscr{F} \circ \Phi \circ C)(f(m^{-1}(z)))}{(\mathscr{F} \circ \Phi)(\chi_{[0, 1]}(m^{-1}(z)))}\\
& = M_{m(m^{-1}(z))} \frac{(\mathscr{F} \circ \Phi)(f(m^{-1}(z)))}{(\mathscr{F} \circ \Phi)(\chi_{[0, 1]}(m^{-1}(z)))}\\
& = z  \frac{(\mathscr{F} \circ \Phi)(f(m^{-1}(z)))}{(\mathscr{F} \circ \Phi)(\chi_{[0, 1]}(m^{-1}(z)))}\\
& = z (\Psi f)(z)\\
& = (M_z \Psi f)(z),
\end{align*}
which proves that $C$ is unitarily equivalent to $M_{z}$ on $L^2(\mu)$ via $\Psi$.
\end{proof}

The discussion in Remark \ref{RRR} says that $I - C$ is unitarily equivalent to the multiplication operator 
$$M_{\frac{-i x - 1/2}{-i x + 1/2}}$$
 on $L^2(-\infty, \infty)$. 
One can now make the argument, inspired by results from \cite{GPRH}, that the above multiplication operator is unitarily equivalent to the bilateral shift $h(e^{i \theta}) \mapsto e^{i \theta} h(e^{i \theta})$ on $L^2(\frac{d \theta}{2 \pi})$ (which is unitary). In particular, note that 
\begin{equation}\label{ImC}
\|I - C\| = 1.
\end{equation}

\section{Resolvent formula}

So far we  know that $\sigma(C) = \partial D(1, 1)$. For $\lambda \not = \partial D(1, 1)$, what is a formula for the resolvent  $(\lambda I - C)^{-1}$? Below we will use the fact that 
$$\lambda \in D(1, 1) \iff \Re \frac{1}{\lambda} > \frac{1}{2}$$ while 
$$\lambda \not \in \overline{D(1, 1)} \iff \Re \frac{1}{\lambda} < \frac{1}{2}.$$

\begin{Theorem}\label{resolventCC}
Let $\lambda \not \in \partial D(1, 1)$. 
\begin{enumerate}
\item If $\lambda \in D(1, 1)$, then 
$$(\lambda I - C)^{-1} g(x) = \frac{1}{\lambda} g(x) - \frac{1}{\lambda^2} \frac{1}{x^{1 - \frac{1}{\lambda}} (1 - x)^{\frac{1}{\lambda}}} \int_{x}^{1} g(t) t^{-\frac{1}{\lambda}} (1 - t)^{-1 + \frac{1}{\lambda}}dt.$$
\item If $\lambda \not \in \overline{D(1, 1)}$, then 
$$(\lambda I - C)^{-1} g(x) = \frac{1}{\lambda} g(x) + \frac{1}{\lambda^2} \frac{1}{x^{1 - \frac{1}{\lambda}} (1 - x)^{\frac{1}{\lambda}}} \int_{0}^{x} g(t) t^{-\frac{1}{\lambda}} (1 - t)^{-1 + \frac{1}{\lambda}}dt.$$
\end{enumerate}
\end{Theorem}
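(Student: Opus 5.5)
Since $\lambda \notin \partial D(1,1) = \sigma(C)$ (Theorem \ref{boundedCCC}), $\lambda I - C$ is already known to be invertible. It therefore suffices to exhibit a bounded operator $R_\lambda$ given by the stated formula with $(\lambda I - C)R_\lambda g = g$ for all $g$ in a dense set, say $g$ continuous with compact support in $(0,1)$. Boundedness of $R_\lambda$ then lets us extend the identity to all of $L^2(0,1)$, and uniqueness of the inverse gives $R_\lambda = (\lambda I - C)^{-1}$.

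To find $R_\lambda$, solve $\lambda f - Cf = g$ directly. Set $F(x) = \int_0^x f(t)(1-t)^{-1}\,dt$, so $Cf = F/x$ and $f = (1-x)F'$. The equation becomes the first order linear ODE
$$\lambda(1-x)F'(x) - \frac{F(x)}{x} = g(x).$$
An integrating factor is $x^{-1/\lambda}(1-x)^{1/\lambda}$, because $\frac{d}{dx}\log\big(x^{-1/\lambda}(1-x)^{1/\lambda}\big) = -\frac{1}{\lambda x(1-x)}$. Solving gives
$$F(x) = \frac{1}{\lambda}x^{1/\lambda}(1-x)^{-1/\lambda}\Big(c + \int_{a}^{x} g(t)\,t^{-1/\lambda}(1-t)^{-1+1/\lambda}\,dt\Big).$$
Recovering $f$ from $f = (\lambda^{-1})(g + F/x)$, which follows from $\lambda f = g + Cf$, yields exactly the two displayed forms with $\frac{1}{\lambda^2}x^{-1+1/\lambda}(1-x)^{-1/\lambda}\int g\,t^{-1/\lambda}(1-t)^{-1+1/\lambda}$. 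The choice of base point and constant is dictated by integrability.

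Write $\Re(1/\lambda) = \alpha$. The homogeneous solution $x^{-1+1/\lambda}(1-x)^{-1/\lambda}$ has modulus $x^{\alpha-1}(1-x)^{-\alpha}$.
\begin{itemize}
\item If $\alpha > 1/2$, it lies in $L^2$ near $0$ but not near $1$. So we must integrate from $x$ to $1$, which kills the bad behaviour at $1$; this is case (a) and produces the minus sign.
\item If $\alpha < 1/2$, it lies in $L^2$ near $1$ but not near $0$, forcing the integral from $0$ to $x$; this is case (b).
\end{itemize}
With the correct base point, a direct differentiation check confirms that $(\lambda I - C)R_\lambda g = g$ for compactly supported $g$.

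The main obstacle is proving that $R_\lambda$ is bounded on $L^2(0,1)$. I would use Schur's test with kernel $|k_\lambda(x,t)| = x^{\alpha-1}(1-x)^{-\alpha}t^{-\alpha}(1-t)^{\alpha-1}\chi_{\{t>x\}}$ in case (a), or $\chi_{\{t<x\}}$ in case (b), together with the test functions $p = q = (x(1-x))^{-1/2}$ used in the boundedness proof for $C$. The Schur integrals reduce, via $s = \log\frac{1-t}{t}$, to $\int e^{-|\alpha-1/2||u-s|}$ over a half-line, which is finite precisely because $\alpha \neq 1/2$.

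Alternatively, and more cleanly, conjugate by $\Phi$ as in Section 2. The integral term becomes convolution with $e^{(\alpha'-1/2)v}$ on a half-line, where $\alpha' = 1/\lambda$. This kernel is in $L^1(\mathbb{R})$ exactly when $\alpha \neq 1/2$, with the half-line chosen as above, so Young's inequality gives boundedness. This also matches the multiplier $(\lambda - (\tfrac12 - ix)^{-1})^{-1}$ under $\mathscr{F}$.
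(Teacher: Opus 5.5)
Your proposal is correct, but it runs in the opposite direction from the paper. The paper \emph{derives} the formula from the a priori existence of $(\lambda I - C)^{-1}$. It multiplies $\lambda f - Cf = g$ by $x$ and differentiates, which gives a first order ODE for $f$ itself with $(xg)'$ on the right. It then integrates against $x^{1-1/\lambda}(1-x)^{1/\lambda}$ and removes $(xg)'$ by parts. The endpoint is fixed by letting $a\to 1^-$ or $a\to 0^+$, arguing that the boundary terms vanish and that the integral converges by Cauchy--Schwarz. Boundedness of the resulting operator is never checked separately. You instead \emph{verify}. Your ODE for the antiderivative $F(x)=\int_0^x f(t)(1-t)^{-1}\,dt$ needs no derivative of $f$ or $g$ and no integration by parts. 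You read off the endpoint from the $L^2$ behaviour of $x^{-1+1/\lambda}(1-x)^{-1/\lambda}$, check the identity on a dense class, and prove boundedness independently. Your Schur computation with $p=q=(x(1-x))^{-1/2}$ does give $\|K_\lambda\|\le|\operatorname{Re}(1/\lambda)-\tfrac12|^{-1}$ for the integral part. The $\Phi$-conjugate of your $R_\lambda$ does have Fourier multiplier exactly $(\lambda-(\tfrac12-ix)^{-1})^{-1}$; that observation alone, via the decomposition $\frac1\lambda+\frac{1}{\lambda^2}\bigl((\tfrac12-\tfrac1\lambda)-ix\bigr)^{-1}$, would already be a complete third proof. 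The paper's route is shorter, but it treats $f$ and an arbitrary $g\in L^2(0,1)$ pointwise, differentiating them and asserting $\mu(a)f(a)\to 0$ and $\mu(a)g(a)\to 0$. Yours manipulates only absolutely continuous functions and yields explicit norm bounds.

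Two small points. First, in your verification you must also check $F(0^+)=0$, not just the ODE. This holds because $|F(x)|=O(x^{\operatorname{Re}(1/\lambda)})$ near $0$ in case (a), and $F\equiv 0$ near $0$ in case (b) for compactly supported $g$. Second, the step you call the main obstacle is optional. Invertibility of $\lambda I-C$ is already known, and for each fixed $x$ the integral in the formula is a bounded linear functional of $g$; this is exactly the paper's Cauchy--Schwarz remark. So the identity on the dense class passes to all of $L^2(0,1)$ along a.e.\ convergent subsequences, without Schur's test.
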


\begin{proof}
So far we have shown that $(\lambda I - C)^{-1}$ is a bounded operator when $\lambda \not \in \partial D(1, 1)$. To find a formula for this resolvent, we  need to solve the equation 
$$(\lambda I -  C) f = g$$
for the function $f$ in terms of $g$. This involves solving the integral equation 
$$\lambda f(x) - \frac{1}{x} \int_{0}^{x} \frac{f(t)}{1 - t} dt = g(x).$$ Multiplying through the above by $x$, then differentiating, and then manipulating the terms of the result yields 
$$f'(x) + f(x) \Big(\frac{1}{x} (1 - \frac{1}{\lambda}) - \frac{1}{\lambda} \frac{1}{1 - x}\Big)  = \frac{(x g(x))'}{\lambda x}.$$
The integrating factor $\mu(x)$ of the above (linear first order) differential equation is 
$$\mu(x) = x^{1 - \frac{1}{\lambda}} (1 - x)^{\frac{1}{\lambda}}.$$ Multiplying through by the integrating factor yields 
$$(\mu(x) f(x))' = \frac{1}{\lambda} x^{-\frac{1}{\lambda}} (1 - x)^{ \frac{1}{\lambda}} (x g(x))'.$$
Now fix $0 < a < 1$. Integrating the above expression over the interval $(a, x)$ and using integration by parts on the right hand side gives us 
\begin{equation}\label{e9r8ughhHH}
\mu(t) f(t)\Big|_{t = a}^{t = x} = \frac{1}{\lambda} t^{1 - \frac{1}{\lambda}} (1 - t)^{\frac{1}{\lambda}} g(t)\Big|_{t = a}^{t = x}  + \frac{1}{\lambda^2} \int_{a}^{x} g(t) t^{-\frac{1}{\lambda}} (1 - t)^{-1 + \frac{1}{\lambda}} dt.
\end{equation}
When $\lambda \in D(1, 1)$, equivalently $\Re \frac{1}{\lambda} > \frac{1}{2}$, we see that  
$$\lim_{a \to 1^{-}} \mu(a) f(a) = \lim_{a \to 1^{-}} \frac{1}{\lambda} a^{1 - \frac{1}{\lambda}} (1 - a)^{\frac{1}{\lambda}} g(a) = 0,$$ due to the factor $(1 - a)^{\frac{1}{\lambda}}$. Moreover, the assumption that $\Re \frac{1}{\lambda} > \frac{1}{2}$ will guarantee that $(1 - t)^{-1 + \frac{1}{\lambda}} \in L^2[0, 1]$ and so for each $0 < x < 1$ the integral 
$$\int_{1}^{x} t^{- \frac{1}{\lambda}} (1 - t)^{-1 + \frac{1}{\lambda}} g(t) dt$$
converges and 
$$\lim_{a \to 1^{-}} \int_{a}^{x}  t^{- \frac{1}{\lambda}} (1 - t)^{-1 + \frac{1}{\lambda}} g(t) dt  = \int_{1}^{x}  t^{- \frac{1}{\lambda}} (1 - t)^{-1 +\frac{1}{\lambda}} g(t) dt.$$
Putting this all together in \eqref{e9r8ughhHH} and dividing the result by the integrating factor $\mu$ yields the formula in (a).

In a similar way, when $\lambda \not \in \overline{D(1, 1)}$, equivalently $\Re \frac{1}{\lambda} < \frac{1}{2}$, then 
$$\lim_{a \to 0^{+}} \mu(a) f(a) = \lim_{a \to 0^{+}} \frac{1}{\lambda} a^{1 - \frac{1}{\lambda}} (1 - a)^{\frac{1}{\lambda}} g(a) = 0,$$ due to the factor $a^{1 - \frac{1}{\lambda}}$.  Moreover, the assumption that $\Re \frac{1}{\lambda} < \frac{1}{2}$ will guarantee that $t^{- \frac{1}{\lambda}} \in L^2[0, 1]$ and so for each $0 < x < 1$ the integral 
$$\int_{0}^{x} t^{- \frac{1}{\lambda}} (1 - t)^{-1 + \frac{1}{\lambda}} g(t) dt$$
converges and 
$$\lim_{a \to 0^{+}} \int_{a}^{x}  t^{- \frac{1}{\lambda}} (1 - t)^{-1 + \frac{1}{\lambda}} g(t) dt  = \int_{0}^{x}  t^{ - \frac{1}{\lambda}} (1 - t)^{-1 + \frac{1}{\lambda}} g(t).$$
Putting this all together in \eqref{e9r8ughhHH} and dividing the result by  the integrating factor  $\mu$ yields the formula in (b).
\end{proof}

When $\lambda = 1$ in part (a), we see that 
$$(I - C)^{-1} g(x) = g(x) + \frac{1}{1 - x} \int_{1}^{x} \frac{g(t)}{t} dt = (I - C^{*}) g(x),$$
which confirms the identity in Remark \ref{RRR}. Finally, due to the fact that $C$ is a normal operator, and thus its spectral radius is equal to its operator norm \cite[p.~244]{ConwayFA},  we have 
$$\|(\lambda I - C)^{-1}\| = \frac{1}{\operatorname{dist}(\lambda, \partial D(1, 1))}.$$
A bit of geometry will show this last quantity is equal to 
$$\frac{1}{| |1 - \lambda| - 1|}.$$

\section{Semigroups}\label{Sec4}

Several good sources for the necessary properties of semigroups of  operators we will use below are  \cite{MR1721989}\cite[Ch.~2, \S 10]{MR629828} \cite{MR710486}. Beginning with the paper \cite{MR897683}, semigroups of weighted composition operators have been a mainstay in the study of the Ces\`{a}ro operator on Banach spaces of analytic functions.
In our setting, we will explore a semigroup of weighted composition operators on $L^2(0, 1)$. 
To get us started,  fix the parameter $t \geq 0$ and define the function $\phi_t$ on $[0, 1]$ by 
$$\phi_{t}(x) : = \frac{e^{-t} x}{(e^{-t} - 1) x + 1}, \quad 0 \leq x \leq 1.$$
One can show that each function $\phi_t$ satisfies $\phi_{t}(0) = 0$, $\phi_t(1) = 1$, $\phi_t$ is  strictly increasing and continuous and thus maps $[0, 1]$ bijectively onto $[0, 1]$ (see Figure \ref{phi_POS}). 
\begin{figure}
\begin{center}
 \includegraphics[width=.4\textwidth]{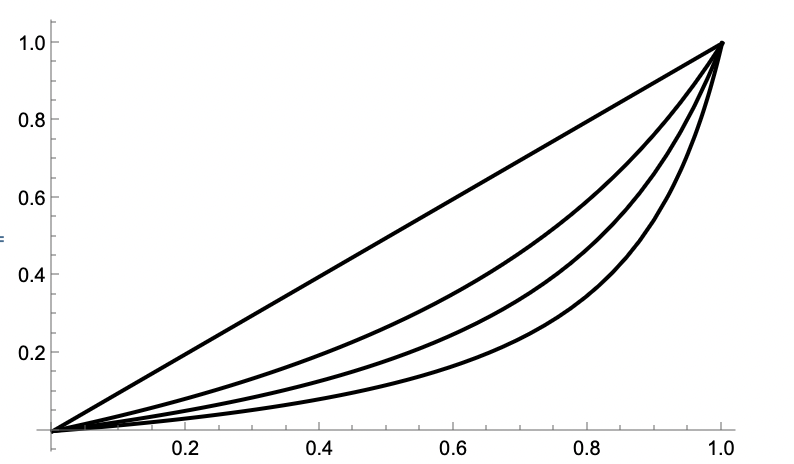}
 \caption{The graphs of the functions $\phi_0, \phi_1, \phi_{3/2}, \phi_2$.  Observe that $\phi_{t}(x) \leq	 \phi_{s}(x) \leq x = \phi_{0}(x)$ when $0 \leq s \leq t$. }
 \label{phi_POS}
 \end{center}
\end{figure}
Also observe that $\phi_{t}(x) \leq x$ for all $t \geq 0$ and $0 \leq x \leq 1$.
Now define the {\em weighted composition operator} $S_{t}$ for an  $f \in L^2(0, 1)$ by 
\begin{equation}\label{sdfsdfvVVV}
(S_{t} f)(x) := \frac{\phi_{t}(x)}{x} f(\phi_t(x)).
\end{equation}
Our first order of business is to show that $S_t$ actually defines a bounded linear  operator on $L^2(0, 1)$ and  to compute its operator norm $\|S_{t}\|$. 

\begin{Proposition}\label{0w349835345}
For each $t \geq 0$, $S_t$ is a bounded linear operator on $L^2(0, 1)$ with $\|S_{t}\| = e^{-\frac{t}{2}}$.
\end{Proposition}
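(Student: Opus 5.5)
The plan is a direct change of variables: I expect $S_t$ to be exactly $e^{-t/2}$ times an isometry (in fact a unitary), which gives both boundedness and the exact value of the norm at once. Fix $t \geq 0$ and $f \in L^2(0,1)$. I will write
$$\|S_t f\|^2 = \int_0^1 \Big(\frac{\phi_t(x)}{x}\Big)^2 |f(\phi_t(x))|^2\, dx$$
and substitute $y = \phi_t(x)$. This is legitimate because, as noted above, $\phi_t$ is a strictly increasing continuous bijection of $[0,1]$ onto itself, and it is smooth on $(0,1)$.

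The first step is to invert $\phi_t$ explicitly. Solving $y = e^{-t}x/((e^{-t}-1)x+1)$ for $x$ gives
$$x = \phi_t^{-1}(y) = \frac{e^{t} y}{(e^{t}-1)y + 1},$$
so $\phi_t^{-1} = \phi_{-t}$ formally. Differentiating,
$$\frac{dx}{dy} = \frac{e^{t}}{((e^{t}-1)y+1)^2}.$$
Also
$$\frac{\phi_t(x)}{x} = \frac{y}{x} = e^{-t}\big((e^t-1)y+1\big).$$
Putting these into the integral, the factor $((e^t-1)y+1)^2$ cancels, and the whole weight collapses to $e^{-2t}\cdot e^{t} = e^{-t}$. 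Therefore
$$\|S_t f\|^2 = e^{-t}\int_0^1 |f(y)|^2\, dy = e^{-t}\|f\|^2.$$

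It follows that $S_t$ is well defined on $L^2(0,1)$; null sets are preserved because $\phi_t$ is a diffeomorphism of $(0,1)$. It is linear, bounded, and satisfies $\|S_t f\| = e^{-t/2}\|f\|$ for every $f$. Taking any nonzero $f$ shows the norm is attained, so $\|S_t\| = e^{-t/2}$.

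As an optional remark, $e^{t/2}S_t$ is surjective, with inverse given by the analogous weighted composition with $\phi_t^{-1}$, so it is unitary. A cross-check is to conjugate by $\Phi$ from Proposition \ref{sdfsdf88}. Since $\log\frac{1-\phi_t(x)}{\phi_t(x)} = \log\frac{1-x}{x} + t$, one expects $\Phi S_t \Phi^{-1}$ to equal $e^{-t/2}$ times translation by $t$, which is consistent with the semigroup $\{T_t\}$ mentioned in the introduction. There is no real obstacle here; the only care needed is the algebra for $\phi_t^{-1}$ and its derivative so that the cancellation is verified correctly.
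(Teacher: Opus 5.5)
Your proposal is correct and takes essentially the same approach as the paper: the substitution $u=\phi_t(x)$ gives $\|S_t f\|^2 = e^{-t}\|f\|^2$ for every $f$, which yields both boundedness and $\|S_t\| = e^{-t/2}$ at once. The only cosmetic difference is that you write the weight and Jacobian in the new variable using $\phi_t^{-1}=\phi_{-t}$, whereas the paper works in the old variable, where $(\phi_t(x)/x)^2 = e^{-t}\phi_t'(x)$.
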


\begin{proof}
The change of variables 
$$u = \phi_{t}(x), \quad du = \frac{e^{-t}}{((e^{-t} - 1) x + 1)^2} dx,$$ yields
\begin{align*}
\int_{0}^{1} |(S_{t} f)(x)|^2 dx & = \int_{0}^{1} \frac{e^{-2t}}{((e^{-t} - 1) x + 1)^2} |f(\phi_t(x))|^2 dx\\
& = e^{-t} \int_{0}^{1} |f(u)|^2 du,
\end{align*} which shows that 
$\|S_t f\| = e^{-\frac{t}{2}} \|f\|$ for all $f \in L^2(0, 1)$. The definition of the operator norm give us  $\|S_{t}\| = e^{-\frac{t}{2}}$. 
\end{proof}

A short computation shows that  $\phi_{s} \circ \phi_{t} = \phi_{s + t}$ for all $s, t \geq 0$ and thus  $\{S_t\}_{t \geq 0}$ forms a semigroup of weighted composition operators on $L^2(0, 1)$ in that $S_{0} = I$ and $S_{s} \circ S_{t} = S_{s + t}$. The strong continuity of this semigroup  is a consequence of the following. 

\begin{Proposition}\label{fdkmgnnNNNNoeewW}
$S_{t} \to I$ in the strong operator topology as $t \to 0^{+}$ in that 
$$\lim_{t \to 0^{+}} \|S_{t} f - f\| = 0$$
for each $f \in L^2(0, 1)$.
\end{Proposition}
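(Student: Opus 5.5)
The plan is a standard density argument combined with the uniform bound $\|S_t\| = e^{-t/2} \leq 1$ from Proposition \ref{0w349835345}. Since $\sup_{0 \leq t \leq 1}\|S_t\| \leq 1$, it suffices to prove $\|S_t f - f\| \to 0$ for $f$ in a dense subset of $L^2(0,1)$. Indeed, for general $f$ and any $g$ from that subset,
$$\|S_t f - f\| \leq \|S_t(f-g)\| + \|S_t g - g\| + \|g - f\| \leq 2\|f-g\| + \|S_t g - g\|,$$
so first choosing $g$ with $\|f - g\|$ small and then letting $t \to 0^{+}$ finishes the proof. I will take the dense subset to be the continuous functions on $[0,1]$, or even just the polynomials, which were already used as a dense set in the proof of Proposition \ref{cvbNNN}.

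For continuous $g$ on $[0,1]$ I will show that $S_t g \to g$ uniformly on $[0,1]$, which gives $L^2$ convergence because the interval has finite measure. First, for each fixed $x$ we have
$$\frac{\phi_t(x)}{x} = \frac{e^{-t}}{(e^{-t}-1)x + 1}.$$
This expression lies between $e^{-t}$ and $1$, since the denominator lies in $[e^{-t}, 1]$. It is therefore continuous on $[0,1]$ and tends to $1$ uniformly as $t \to 0^{+}$. Second,
$$|\phi_t(x) - x| = x\Big|\frac{\phi_t(x)}{x} - 1\Big| \leq 1 - e^{-t},$$
so $\phi_t \to \mathrm{id}$ uniformly. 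Uniform continuity of $g$ then gives $g\circ\phi_t \to g$ uniformly. Finally we combine the two pieces:
$$|S_t g - g| \leq \Big|\frac{\phi_t}{x}\Big|\,|g\circ\phi_t - g| + \Big|\frac{\phi_t}{x} - 1\Big|\,|g| \leq \|g\circ\phi_t - g\|_\infty + (1-e^{-t})\|g\|_\infty \to 0.$$

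No step here is a genuine obstacle. The only point that needs care is that the weight $\phi_t(x)/x$ does not blow up at $x = 0$, and the explicit formula above handles this since the weight is bounded by $1$. The uniform bound on $\|S_t\|$ is supplied directly by Proposition \ref{0w349835345}, so the $\varepsilon/3$ argument goes through without further estimates.
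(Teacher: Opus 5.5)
Your proof is correct and takes essentially the same route as the paper. Both approximate $f$ by a continuous $g$, use the bound $\|S_t\| = e^{-t/2} \leq 1$ from Proposition~\ref{0w349835345} in a three-term triangle inequality, and then show $S_t g \to g$ in $L^2(0,1)$. The only difference is that you obtain uniform convergence from the explicit bounds $e^{-t} \leq \phi_t(x)/x \leq 1$ and $|\phi_t(x) - x| \leq 1 - e^{-t}$, whereas the paper uses pointwise convergence plus dominated convergence. Your version is slightly more explicit, in particular about why the weight stays bounded near $x = 0$.
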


\begin{proof}
Fix $f \in L^2(0, 1)$ and $\varepsilon > 0$. By the density of the continuous functions on $[0, 1]$ in $L^2(0, 1)$, we can choose a continuous function $g$ on $[0, 1]$ with $\|f - g\| <  \frac{\varepsilon}{2}$. By the triangle inequality and Proposition \ref{0w349835345} we have 
\begin{align*}
\|S_{t} f - f\| & \leq \|S_{t} f - S_{t} g\| + \|S_{t} g - g\| + \|g - f\|\\
& \leq \|S_{t}\| \|f - g\| + \|S_{t} g - g\| + \|g - f\|\\
& \leq e^{-\frac{t}{2}} \tfrac{\varepsilon}{2} + \|S_{t} g - g\| + \tfrac{\varepsilon}{2}\\
& \leq \epsilon +  \|S_{t} g - g\|.
\end{align*}
Since $\phi_{t}(x) \to x$ as $t \to 0^{+}$ for each $0 \leq x \leq 1$ and thus $S_{t} g \to g$ pointwise on $[0, 1]$, we can use the continuity of $g$ on $[0, 1]$, along with the dominated convergence theorem, to see that 
 $S_{t} g \to g$ in the norm of  $L^2(0, 1)$ as $t \to 0^{+}$. Using this detail and the estimate above gives us 
$$\varlimsup_{t \to 0^{+}} \|S_{t} f - f\| \leq \varepsilon.$$ The result now follows. 
\end{proof}

Our analysis below requires a formula for the adjoint of $S_{t}$. 

\begin{Proposition}\label{addjdjdjint}
For  each fixed $t \geq  0$, 
$$(S_{t}^{*} g)(x) = \frac{e^{-t}}{(1 - e^{-t}) x+ e^{-t}} g\Big(\frac{x}{(1 - e^{-t}) x + e^{-t}}\Big), \quad g \in L^2(0, 1).$$
\end{Proposition}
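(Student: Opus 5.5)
The plan is to compute $\langle S_t f, g\rangle$ directly, for $f, g \in L^2(0,1)$, and change variables so that $f$ appears with argument $u$. The natural substitution is $u = \phi_t(x)$, the same one used in the proof of Proposition \ref{0w349835345}. Since $\phi_t$ is a strictly increasing bijection of $[0,1]$ onto itself, this substitution is legitimate, and it turns the pairing into an integral of $f(u)$ against a function of $u$. That function is then $S_t^{*}g$.

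First, invert $\phi_t$. Solving $u = e^{-t}x/((e^{-t}-1)x+1)$ for $x$ gives
$$x = \phi_t^{-1}(u) = \frac{u}{(1-e^{-t})u + e^{-t}}.$$
One can check directly that $\phi_t\circ\phi_t^{-1} = \mathrm{id}$. Alternatively, note that $\phi_t^{-1}$ is formally $\phi_{-t}$, by the semigroup law $\phi_s\circ\phi_t=\phi_{s+t}$. The differential is
$$dx = \frac{e^{-t}}{((1-e^{-t})u+e^{-t})^2}\,du.$$

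Next, express the weight $\phi_t(x)/x$ in terms of $u$:
$$\frac{\phi_t(x)}{x} = \frac{u}{\phi_t^{-1}(u)} = (1-e^{-t})u + e^{-t}.$$
Then
$$\langle S_t f, g\rangle = \int_0^1 \frac{\phi_t(x)}{x} f(\phi_t(x))\,\overline{g(x)}\,dx
= \int_0^1 f(u)\,\big((1-e^{-t})u+e^{-t}\big)\,\overline{g(\phi_t^{-1}(u))}\,\frac{e^{-t}}{((1-e^{-t})u+e^{-t})^2}\,du .$$
After cancellation this equals $\int_0^1 f(u)\,\overline{h(u)}\,du$, where $h$ is exactly the stated formula for $S_t^{*}g$. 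Since $f$ is arbitrary, $S_t^{*}g = h$.

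For justification: $S_t$ is bounded by Proposition \ref{0w349835345}, so $S_t^{*}$ exists. The computation above holds for all $f, g \in L^2(0,1)$, because the integrand is integrable by Cauchy--Schwarz, and the substitution is valid for $L^1$ functions under an absolutely continuous monotone bijection. The only real work is the algebra in inverting $\phi_t$ and simplifying the product of the weight and the Jacobian. That step is the main place an error could slip in, so I would double-check it at $t=0$, where the formula should give $S_0^{*} = I$.
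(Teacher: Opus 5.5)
Your proposal is correct and is essentially the paper's proof: the same substitution $u=\phi_t(x)$, with $x = u/((1-e^{-t})u+e^{-t})$ and $dx = e^{-t}\,du/((1-e^{-t})u+e^{-t})^2$, turns $\langle S_t f, g\rangle$ into $\int_0^1 f(u)\,\overline{h(u)}\,du$. The weight $\phi_t(x)/x = (1-e^{-t})u+e^{-t}$ cancels one factor of the Jacobian's denominator, which gives exactly the stated formula; your algebra is correct, including the check $S_0^{*}=I$.
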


\begin{proof}
The change of variables $u = \phi_{t}(x)$ implies that 
$$x = \frac{u}{(1 - e^{-t}) u +e^{-t}} \;  \mbox{and} \;  
d x = \frac{e^{-t} du}{((1 - e^{-t})u + e^{-t})^2}.$$
For $f, g \in L^2(0, 1)$ we have, using the above change of variables,
\begin{align*}
\langle S_{t} f, g\rangle & = \int_{0}^{1} \frac{\phi_t (x)}{x} f(\phi_t (x)) \overline{g(x)} dx\\
& = \int_{0}^{1} f(u) \overline{\frac{e^{-t}}{((1 - e^{-t}) u + e^{-t})} g\Big(\frac{u}{(1 - e^{-t}) u + e^{-t}}\Big)} du,
\end{align*}
which yields the desired formula for $S_{t}^{*} g$. 
\end{proof}

The above discussion shows that $\{S_{t}^{*}\}_{t \geq 0}$ is also a strongly continuous semigroup of operators on $L^2(0, 1)$. Indeed, the semigroup property for $\{S_{t}^{*}\}_{t \geq 0}$ follows from that of $\{S_{t}\}_{t \geq 0}$. The strong continuity follows from a similar argument as in Proposition \ref{fdkmgnnNNNNoeewW}. This fact is not particular to the semigroup $\{S_t\}_{t \geq 0}$ and is true in greater generality \cite[p.~145]{MR629828}.
We pause here for a moment to make the following interesting observation. 

When $t < 0$, observe that the function $\phi_t$ still maps $[0, 1]$ bijectively onto $[0, 1]$ but $\phi_t(x) \geq x$ for all $0 \leq x \leq 1$ (the reverse holds when $t > 0$) (see Figure \ref{phi_NEG}).
\begin{figure}
\begin{center}
 \includegraphics[width=.4\textwidth]{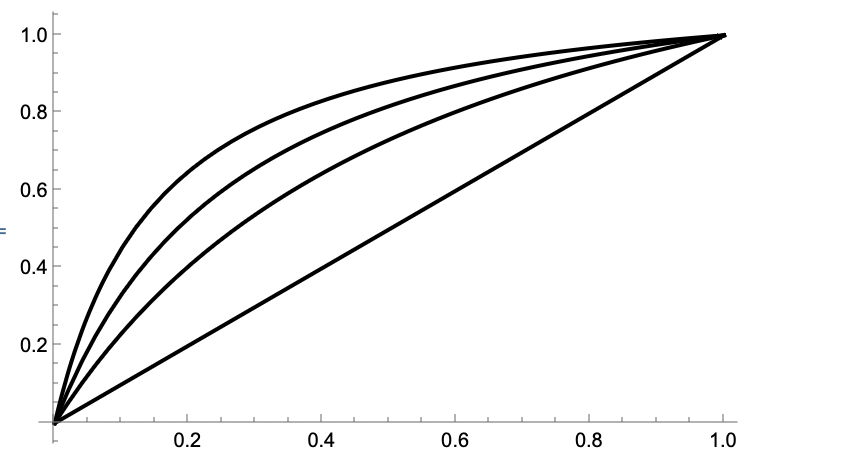}
 \caption{The graphs of $\phi_0, \phi_{-1}, \phi_{-3/2}, \phi_{-2}$.  Observe that $\phi_{s}(x) \geq \phi_{t}(x) \geq x =  \phi_{0}(x)$ when $s \leq t \leq 0$. }
 \label{phi_NEG}
 \end{center}
\end{figure}

 We will show below that for $t < 0$, the operator $S_{t}$, defined the same way as when $t > 0$,  is still a bounded operator on $L^2(0, 1)$. In fact, we can say more. 

\begin{Proposition}\label{Ut}
For each $t \geq 0$ we have the following. 
\begin{enumerate}
\item $S_{t}^{*} = e^{-t} S_{-t}$;
\item $U_{t} := e^{\frac{t}{2}} S_t$ is a unitary operator on $L^{2}(0, 1)$ with $U_{t}^{*} = U_{-t}$. 
\end{enumerate} 
\end{Proposition}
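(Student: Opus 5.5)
Part (a) comes from comparing the adjoint formula in Proposition \ref{addjdjdjint} with the definition \eqref{sdfsdfvVVV} of $S_{-t}$.

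First I compute $\phi_{-t}$ from the defining formula with $t$ replaced by $-t$:
$$\phi_{-t}(x) = \frac{e^{t}x}{(e^{t}-1)x+1}.$$
Multiplying numerator and denominator by $e^{-t}$ gives
$$\phi_{-t}(x) = \frac{x}{(1-e^{-t})x + e^{-t}},$$
which is exactly the argument of $g$ in Proposition \ref{addjdjdjint}. It is also the inverse map $\phi_t^{-1}$ found in that proof, consistent with $\phi_{-t}\circ\phi_t=\phi_0=\mathrm{id}$. The weight of $S_{-t}$ is then
$$\frac{\phi_{-t}(x)}{x} = \frac{1}{(1-e^{-t})x+e^{-t}}.$$
Hence the formula of Proposition \ref{addjdjdjint} reads $(S_t^*g)(x) = e^{-t}\,\frac{\phi_{-t}(x)}{x}\, g(\phi_{-t}(x)) = e^{-t}(S_{-t}g)(x)$.

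This identity also shows that $S_{-t}=e^{t}S_t^*$ is bounded on $L^2(0,1)$, which settles the boundedness claim for negative parameters made just before the proposition.

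For part (b), Proposition \ref{0w349835345} gives $\|S_tf\| = e^{-t/2}\|f\|$, so $U_t=e^{t/2}S_t$ is an isometry. Part (a) gives
$$U_t^* = e^{t/2}S_t^* = e^{t/2}e^{-t}S_{-t} = e^{-t/2}S_{-t} = U_{-t}.$$
To get unitarity I need surjectivity, i.e.\ $U_tU_t^*=I$. Since $\phi_t\circ\phi_{-t}=\mathrm{id}$ (the composition law $\phi_s\circ\phi_t=\phi_{s+t}$ is a M\"obius-matrix identity and holds for all real $s,t$), a direct computation yields
$$(S_tS_{-t}f)(x)=\frac{\phi_t(x)}{x}\cdot\frac{\phi_{-t}(\phi_t(x))}{\phi_t(x)}\, f(\phi_{-t}(\phi_t(x))) = f(x).$$
So $S_tS_{-t}=I$, and therefore $U_tU_{-t}=I$. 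Together with $U_t^*U_t=I$ from the isometry property, this shows $U_t$ is unitary with $U_t^*=U_{-t}$.

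The only point needing care, rather than a real obstacle, is checking that the semigroup law and the algebraic identities extend to negative indices. I would verify $\phi_s\circ\phi_t=\phi_{s+t}$ by representing $\phi_t$ by the matrix $\begin{pmatrix} e^{-t} & 0\\ e^{-t}-1 & 1\end{pmatrix}$ and multiplying; the relation holds for all real $s,t$.
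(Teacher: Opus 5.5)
Your proof is correct and follows the paper's route: in (a) you rewrite the adjoint formula of Proposition \ref{addjdjdjint} as $e^{-t}\frac{\phi_{-t}(x)}{x}\,g(\phi_{-t}(x))$, and in (b) you combine (a) with the isometry $\|S_tf\|=e^{-t/2}\|f\|$. Your check that $S_tS_{-t}=I$, via the group law $\phi_s\circ\phi_t=\phi_{s+t}$ for all real $s,t$, also justifies the step $U_{-t}=U_t^{-1}$, which the paper asserts without comment. (Your displayed computation uses $\phi_{-t}\circ\phi_t=\mathrm{id}$ rather than the stated $\phi_t\circ\phi_{-t}=\mathrm{id}$, but both hold.)
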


\begin{proof}
For any $g \in L^2(0, 1)$, Proposition \ref{addjdjdjint} says that 
\begin{align*}
(S_{t}^{*} g)(x) & =  \frac{e^{-t}}{(1 - e^{-t}) x+ e^{-t}} g\Big(\frac{x}{(1 - e^{-t}) x + e^{-t}}\Big)\\
& = \frac{1}{(e^{t} - 1)x + 1} g\Big(\frac{e^{t} x}{(e^{t} - 1) x + 1}\Big)\\
& = e^{-t}  \frac{e^{t}}{(e^{t} - 1)x + 1} g\Big(\frac{e^{t} x}{(e^{t} - 1) x + 1}\Big)\\
& = e^{-t} \frac{\phi_{-t}(x)}{x} g(\phi_{-t}(x))\\
& = e^{-t} (S_{-t} g)(x).
\end{align*}
This proves $(a)$.

Now observe that 
\begin{align*}
U_{t}^{*} & = (e^{\frac{t}{2}} S_{t})^{*}\\
& = e^{\frac{t}{2}} S_{t}^{*}\\
& = e^{\frac{t}{2}} e^{-t} S_{-t}\\
& = U_{-t}\\
& = U_{t}^{-1}
\end{align*}
and $U_{t}^{*} = U_{t}^{-1} = U_{-t}$, which proves $(b)$.
\end{proof}

For the semigroup $\{S_t\}_{t \geq 0}$, its {\em infinitesimal generator} $A$ is defined by 
$$A f := \lim_{t \to 0^{+}} \frac{S_{t} f - f}{t}$$ whenever this limit exists  (in the norm of $L^2(0, 1)$). Define $\mathcal{D}(A)$, the domain of $A$, to be the set of all  functions $f \in L^2(0, 1)$ for which this limit exists. Known theory about strongly continuous semigroups says that $\mathcal{D}(A)$ is a dense linear manifold of $L^2(0, 1)$ \cite[Ch.~ 2, \S 10]{MR629828}. Moreover, some calculus shows that for appropriately smooth functions $f$ we have 
\begin{equation}\label{AAAaa}
(A f)(x) =\Big( \frac{\partial}{\partial t}\Big( \frac{\phi_{t}(x)}{x} \cdot f(\phi_{t}(x))\Big)\Big)\Big|_{t = 0} = - (1 - x)(x f(x))'
\end{equation} and hence 
\begin{equation}\label{666FFAris}
\mathcal{D}(A) = \{f \in L^2(0, 1): (1 - x)(x f(x))' \in L^2(0, 1)\}.
\end{equation}
Recall from Proposition \ref{0w349835345}  that $\|S_{t}\| = e^{-\frac{t}{2}}$. There is the well-known fact  \cite[Ch.~ 2, \S 10]{MR629828}
that if $\{T_{t}\}_{t \geq 0}$ is a strongly continuous semigroup of operators on a Hilbert space with 
$$\|T_{t}\| \leq e^{w t} \; \mbox{for all $t \geq 0$,}$$  then the spectrum of its infinitesimal generator is contained in the half plane 
$$\{\lambda \in \C: \Re \lambda \leq w\}.$$ Applying this  to $T_t = S_{t}$, 
yields 
$$\sigma(A) \subset \{\lambda \in \C: \Re \lambda \leq -\tfrac{1}{2}\}.$$ 
This means that $0 \not \in \sigma(A)$ and thus if 
$$R(\lambda, A) = (\lambda I - A)^{-1}$$ is the resolvent operator, then 
setting $\lambda = 0$ and solving the resulting differential equation 
$$-(1 - x) (x f)'(x) = g(x),$$ shows that $C = R(0, A)$.
From semigroup theory \cite[p.~142]{MR629828}, the resolvent of the infinitesimal generator $A$ of $\{S_{t}\}_{t \geq 0}$ can be given, for $\Re \lambda > -\frac{1}{2}$,  by the Laplace transform formula 
 \begin{equation}\label{LaplaceT}
(\lambda I - A)^{-1}f  = \int_{0}^{\infty } e^{- \lambda t} S_{t} f dt, \quad f \in L^2(0, 1).
\end{equation}
 Thus, 
$$C f = \int_{0}^{\infty} S_{t} f dt, \quad f \in L^2(0, 1).$$ 
The spectral mapping theorem applied to the identity 
$$C = R(0, A) = (-A)^{-1}$$ \cite[p.~243]{MR1721989} says that 
$$\sigma(C) \setminus \{0\} = \sigma(R(0, A)) \setminus \{0\}.$$
Now use the fact that  $\sigma(C) = \partial D(1, 1)$ (Theorem \ref{boundedCCC}) to conclude the following. 

\begin{Corollary}
$\sigma(A) = \{\lambda \in \C: \Re \lambda = -\frac{1}{2}\}$.
\end{Corollary}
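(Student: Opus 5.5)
The plan is to transfer the spectrum of $C$ to $A$ through the identity $C = R(0,A) = (-A)^{-1}$, using the spectral mapping theorem for resolvents of closed operators. For a closed operator $A$ with $0 \in \rho(A)$, one has $\lambda \in \sigma(A)$, $\lambda \neq 0$, if and only if $\lambda^{-1} \in \sigma(A^{-1})$ \cite[p.~243]{MR1721989}. Applied to $-A$, this gives
$$\sigma(C)\setminus\{0\} = \big\{ -\lambda^{-1} : \lambda \in \sigma(A)\big\}.$$
Before using it, I would check the hypotheses. The operator $A$ is closed, being the generator of a strongly continuous semigroup. Also $0 \in \rho(A)$, since $\sigma(A) \subset \{\Re \lambda \leq -\tfrac12\}$ by the growth bound $\|S_t\| = e^{-t/2}$ from Proposition \ref{0w349835345}.

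By Theorem \ref{boundedCCC}, $\sigma(C) = \partial D(1,1)$, so $\sigma(A) = \{\lambda : -1/\lambda \in \partial D(1,1)\setminus\{0\}\}$. Writing $w = -1/\lambda$, the fact recorded before Theorem \ref{resolventCC} says that $w \in \partial D(1,1)\setminus\{0\}$ if and only if $\Re(1/w) = \tfrac12$. This is the boundary case of the two stated equivalences. It can be checked directly: $|w-1| = 1 \iff |w|^2 = 2\Re w \iff \Re(\bar w/|w|^2) = \tfrac12$. Since $1/w = -\lambda$, the condition becomes $\Re\lambda = -\tfrac12$, which is the claimed set.

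An alternative consistency check uses the unitary equivalence \eqref{sd8f8sdfhhhMMMMMmFF}. Under $\mathscr F\circ\Phi$, $-A$ should correspond to $M_{\frac12 - ix}$, because $C$ corresponds to its inverse. The operator $M_{\frac12 - ix}$ is an unbounded multiplication operator whose spectrum is the closure of the range of its symbol, namely the line $\{\Re z = \tfrac12\}$. This again gives $\sigma(A) = \{\Re\lambda = -\tfrac12\}$.

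The main obstacle is applying the spectral mapping step correctly to an unbounded $A$. Specifically, one must verify both inclusions: $\lambda \neq 0$ lies in $\rho(A)$ exactly when $-1/\lambda \in \rho(C)$. The explicit formula $(\mu - A^{-1})^{-1} = -\lambda A(\lambda - A)^{-1}$ and its converse handle this. Since $0 \notin \sigma(A)$, no information is lost at the excluded point $0 \in \sigma(C)$. The point $0$ corresponds to $\lambda = \infty$ and is exactly what makes $A$ unbounded.
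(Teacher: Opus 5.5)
Your proposal is correct and takes the same route as the paper. Both apply the spectral mapping theorem for resolvents to $C = R(0,A) = (-A)^{-1}$, using $0 \in \rho(A)$ from the growth bound $\|S_t\| = e^{-t/2}$ and $\sigma(C) = \partial D(1,1)$. Your explicit check that $-1/\lambda \in \partial D(1,1)\setminus\{0\}$ if and only if $\Re \lambda = -\tfrac{1}{2}$ supplies the computation the paper leaves implicit: its displayed identity $\sigma(C)\setminus\{0\} = \sigma(R(0,A))\setminus\{0\}$ is tautological as written.
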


Analogous to the proof Theorem \ref{resolventCC}, we can develop a resolvent formula for $A$.

\begin{Corollary}
For $\Re \lambda >   -\frac{1}{2}$, 
$$((\lambda I - A)^{-1} f)(x) = \frac{(1 - x)^{\lambda}}{x^{\lambda + 1}} \int_{0}^{x} f(t) \frac{t^{\lambda}}{(1 - t)^{\lambda + 1}}dt.$$
For $\Re \lambda < -\frac{1}{2}$, 
$$((\lambda I - A)^{-1} f)(x) = -  \frac{(1 - x)^{\lambda}}{x^{\lambda + 1}} \int_{x}^{1} f(t) \frac{t^{\lambda}}{(1 - t)^{\lambda + 1}}dt.$$
\end{Corollary}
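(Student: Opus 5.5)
Since $\sigma(A)$ is the line $\Re\lambda=-\frac12$ by the preceding Corollary, $\lambda I-A$ is invertible off that line. My plan is to solve $(\lambda I-A)f=g$ explicitly, exactly as in Theorem \ref{resolventCC}, and pick the solution that lies in $L^2(0,1)$. By \eqref{AAAaa} the equation reads
$$\lambda f(x)+(1-x)(xf(x))'=g(x).$$
Setting $h=xf$, this becomes the linear first order equation
$$h'+\frac{\lambda}{x(1-x)}h=\frac{g(x)}{1-x}.$$
Since $\int \frac{dx}{x(1-x)}=\log\frac{x}{1-x}$, an integrating factor is $\bigl(\frac{x}{1-x}\bigr)^{\lambda}$. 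This gives
$$\Bigl(\Bigl(\tfrac{x}{1-x}\Bigr)^{\lambda}h\Bigr)'=\frac{x^{\lambda}}{(1-x)^{\lambda+1}}g(x).$$
So every solution has the form
$$f(x)=\frac{(1-x)^{\lambda}}{x^{\lambda+1}}\Bigl(c+\int_{a}^{x} g(t)\frac{t^{\lambda}}{(1-t)^{\lambda+1}}dt\Bigr).$$
The task is to choose the base point and constant correctly in each half plane.

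For $\Re\lambda>-\frac12$ I take the base point $0$ and $c=0$. Write $\lambda=\sigma+i\tau$. Near $0$ the weight $t^{\lambda}$ is in $L^2$ near $0$ because $2\sigma>-1$, so the integral converges for $g\in L^2$. The homogeneous solution $(1-x)^{\lambda}x^{-\lambda-1}$ is not in $L^2$ near $0$ because $2(\sigma+1)>1$. So any constant $c\neq 0$ would destroy square integrability at $0$, which forces $c=0$.

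For $\Re\lambda<-\frac12$ the roles reverse. Now $(1-t)^{-\lambda-1}$ is in $L^2$ near $1$ because $-2\sigma-2>-1$, so the integral from $x$ to $1$ converges. The homogeneous solution $(1-x)^{\lambda}$ fails to be in $L^2$ near $1$. This forces the base point $1$ and $c=0$, and writing $\int_1^x=-\int_x^1$ gives the stated minus sign.

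Rather than verifying $L^2$ boundedness of these Hardy-type operators directly, I will argue as follows. We already know $(\lambda I-A)^{-1}$ exists as a bounded operator for $\Re\lambda\neq-\frac12$. So it suffices to check, for $g$ in a dense class (say continuous $g$ with compact support in $(0,1)$), that the displayed $f$ lies in $\mathcal{D}(A)$ from \eqref{666FFAris} and satisfies $(\lambda I-A)f=g$.

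This needs a uniqueness point: $\lambda I-A$ is injective, so the $L^2$ solution is unique, and the only freedom was the homogeneous term, which we excluded. For compactly supported $g$, the candidate $f$ agrees near one endpoint with $0$. Near the other endpoint it equals a constant multiple of the homogeneous solution, and by the exponent conditions that multiple is square integrable there. So $f\in L^2$, and the formula for $(1-x)(xf)'=g-\lambda f$ shows $f\in\mathcal{D}(A)$.

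Both sides are then bounded operators agreeing on a dense set. For the formula side this still needs boundedness; alternatively, one can pass to the limit pointwise using $L^2$ convergence of $g_n\to g$ together with the local $L^2$ weights, which yields a.e. agreement. This closing density/limit step is the main technical point. A cleaner alternative for $\Re\lambda>-\frac12$ is to compute directly from the Laplace formula \eqref{LaplaceT}, substituting $s=\phi_t(x)$ in $\int_0^\infty e^{-\lambda t}\frac{\phi_t(x)}{x}f(\phi_t(x))\,dt$. The case $\Re\lambda<-\frac12$ then follows via $S_t^*=e^{-t}S_{-t}$ and adjoints.
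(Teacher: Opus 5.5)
Your proposal is correct. Its skeleton matches the paper's: the same equation $\lambda f+(1-x)(xf)'=g$, the same integrating factor (your $\bigl(\tfrac{x}{1-x}\bigr)^{\lambda}xf$ is the paper's $\mu f$ with $\mu(x)=x^{1+\lambda}(1-x)^{-\lambda}$), and the same base point $0$ or $1$ according to the half-plane.

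The difference is in how the base point is justified. The paper lets $a\to0^+$ (resp.\ $a\to1^-$) and asserts that $\mu(a)f(a)\to0$ because of the factor $a^{1+\lambda}$ (resp.\ $(1-a)^{-\lambda}$). That is not by itself a reason, since $f=(\lambda I-A)^{-1}g$ is a priori only in $L^2$. Your observation that $(1-x)^{\lambda}x^{-\lambda-1}$ fails to be square integrable at the relevant endpoint is exactly what is needed. Your closing step is also complete: verify the formula on a dense class, then pass to the limit pointwise, using Cauchy--Schwarz against $t^{\lambda}(1-t)^{-\lambda-1}\in L^2(0,x)$ (resp.\ $L^2(x,1)$) together with a.e.\ convergence of a subsequence of $(\lambda I-A)^{-1}g_n$. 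This even gives boundedness of the integral operator for free.

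That route still leans on the domain description \eqref{666FFAris}, which the paper states without proof. Your Laplace-transform alternative avoids it: the substitution $s=\phi_t(x)$, $e^{t}=\frac{x(1-s)}{s(1-x)}$, $dt=-\frac{ds}{s(1-s)}$ turns \eqref{LaplaceT} into the first formula in one line.

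One correction there. Taking adjoints of the Laplace formula for $\{S_t^*\}_{t\geq0}$ only returns you to $\Re\lambda>-\frac12$. For $\Re\lambda<-\frac12$ you need the group structure: $\{S_{-t}\}_{t\geq0}$ has generator $-A$ and $\|S_{-t}\|=e^{t/2}$ (equivalently, $S_t^*=e^{-t}S_{-t}$ gives $A^*=-I-A$). Hence $(\lambda I-A)^{-1}=-\int_0^\infty e^{\lambda t}S_{-t}\,dt$, and the substitution $s=\phi_{-t}(x)$ yields the integral over $(x,1)$ with the minus sign.
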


\begin{proof}
We know that $\sigma(A) = \{\lambda: \Re \lambda = -\frac{1}{2}\}$. Thus we can solve 
$$(\lambda I - A) f = g$$ for any (smooth enough) $f$ and $g$. So from \eqref{666FFAris} we need to solve the differential equation 
$$\lambda f(x) + (1 - x) (x f)'(x) = g(x).$$
Some manipulation of the above says we need to solve 
$$\frac{\lambda + 1 - x}{x(1 - x)} f(x) + f'(x) = \frac{g(x)}{x (1 - x)}.$$
The integrating factor $\mu(x)$ for the above first order linear differential equation is 
$$\mu(x) = \frac{x^{1 + \lambda}}{(1 - x)^{\lambda}}.$$
Multiplying by $\mu$ on both sides yields 
$$\Big( \frac{x^{1 + \lambda}}{(1 - x)^{\lambda}} f(x)\Big)' = \frac{g(x) x^{\lambda}}{(1 - x)^{\lambda + 1}}.$$
From here the proof parallels that of Theorem \ref{resolventCC}. Indeed, fix $0 < a < 1$ and integrate both sides of the above from $a$ to $x$ to get 
$$ \frac{x^{1 + \lambda}}{(1 - x)^{\lambda}} f(x) -  \frac{a^{1 + \lambda}}{(1 - a)^{\lambda}} f(a) = \int_{a}^{x} \frac{g(t) t^{\lambda}}{(1 - t)^{\lambda + 1}}dt.$$
When $\Re  \lambda > - \frac{1}{2}$ the term 
$$\frac{a^{1 + \lambda}}{(1 - a)^{\lambda}} f(a)$$ goes to zero as $a \to 0^{+}$ due to the term $a^{1 + \lambda} \to 0$. Moreover, via the Cauchy--Schwarz inequality, the integral 
$$ \int_{0}^{x} \frac{g(t) t^{\lambda}}{(1 - t)^{\lambda + 1}}dt$$ converges and
$$ \int_{a}^{x} \frac{g(t) t^{\lambda}}{(1 - t)^{\lambda + 1}}dt \to  \int_{0}^{x} \frac{g(t) t^{\lambda}}{(1 - t)^{\lambda + 1}}dt$$ as $a \to 0^{+}$. In a similar way, when $\Re \lambda < -\frac{1}{2}$, the term 
$$\frac{a^{1 + \lambda}}{(1 - a)^{\lambda}} f(a)$$ goes to zero as $a \to 1^{-}$ due to the term $(1 - a)^{-\lambda} \to 0$ as $a \to 1^{-}$. 
Moreover, the integral 
$$ \int_{x}^{1} \frac{g(t) t^{\lambda}}{(1 - t)^{\lambda + 1}}dt$$ converges and 
$$ \int_{x}^{a} \frac{g(t) t^{\lambda}}{(1 - t)^{\lambda + 1}}dt \to  \int_{x}^{1} \frac{g(t) t^{\lambda}}{(1 - t)^{\lambda + 1}}dt$$ as $a \to 1^{-}$. 
These two cases give us the desired formulas for $(\lambda I - A)^{-1}$. 
\end{proof}

In the previous corollary notice that when $\lambda = 0$ we get 
$(-A)^{-1} = C$ while when 
 $\lambda = -1$ we get 
$(I + A)^{-1} = C^{*}$.

\section{Invariant subspaces}

This next result is inspired by  analogous results about the invariant subspaces for the classical Ces\`{a}ro operator $\mathscr{C}$ on $H^2$ \cite{MR4757014} and the Hardy averaging operator $H$ on $L^2(0, 1)$  from \eqref{Haaa}   \cite{GPRH}. Recall the semigroup of weighted composition operators $\{S_t\}_{t \geq 0}$ on $L^2(0, 1)$ from \eqref{sdfsdfvVVV}.

\begin{Theorem}\label{ISCe}
For a closed subspace $\mathcal{M}$ of $L^2(0, 1)$, the following are equivalent.
\begin{enumerate}
\item $C \mathcal{M} \subset \mathcal{M}$; 
\item $S_{t} \mathcal{M} \subset \mathcal{M}$ for all $t \geq 0$. 
\end{enumerate}
\end{Theorem}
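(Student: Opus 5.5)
The direction (b) $\Rightarrow$ (a) uses the Laplace transform formula \eqref{LaplaceT} with $\lambda = 0$, which the paper already derived:
$$Cf = \int_{0}^{\infty} S_t f\,dt, \quad f \in L^2(0,1).$$
Because $\|S_t\| = e^{-t/2}$ (Proposition \ref{0w349835345}), this Bochner integral converges absolutely in norm. If $f \in \mathcal{M}$, then every $S_t f$ lies in the closed subspace $\mathcal{M}$, and $t \mapsto S_t f$ is continuous (Proposition \ref{fdkmgnnNNNNoeewW} together with the semigroup property). The integral is therefore a norm limit of Riemann sums of elements of $\mathcal{M}$, so $Cf \in \mathcal{M}$. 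A quick alternative is to pair with any $g \in \mathcal{M}^{\perp}$: each $\langle S_t f, g\rangle$ vanishes, so $\langle Cf, g\rangle = 0$.

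For (a) $\Rightarrow$ (b), the plan is to show that $\mathcal{M}$ is invariant under all resolvents $(\lambda I - A)^{-1}$ for $\lambda$ large, and then recover $S_t$ through the exponential formula. Since $C = (-A)^{-1}$, we have
$$(\lambda I - A)^{-1} = C(I + \lambda C)^{-1} \quad \text{for } \Re \lambda > -\tfrac12 .$$
To check this, write $\lambda I - A = (-A)(I + \lambda (-A)^{-1}) = (-A)(I + \lambda C)$; it holds on $\mathcal{D}(A)$ and extends since both sides are bounded. Hence $\mathcal{M}$ is invariant under $(\lambda I - A)^{-1}$ whenever it is invariant under $(I + \lambda C)^{-1}$, that is, under $(\mu I - C)^{-1}$ with $\mu = -1/\lambda$. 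For real $\lambda > 0$ we get $\mu \in (-\infty, 0)$. This set lies outside $\overline{D(1,1)}$ and hence in the unbounded component of $\C \setminus \sigma(C)$; in fact $\C \setminus \partial D(1,1)$ has two components, and $(-\infty,0)$ lies in the unbounded one.

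The main obstacle is this step: invariance under $C$ gives invariance under $(\mu I - C)^{-1}$ only for $\mu$ in the unbounded component of the resolvent set. I would handle it with the standard argument. For $|\mu| > \|C\| = 2$, the Neumann series $(\mu I - C)^{-1} = \sum_n C^n/\mu^{n+1}$ converges in norm and maps $\mathcal{M}$ into $\mathcal{M}$. Now consider the set of $\mu$ in the unbounded component $\Omega$ for which $(\mu I - C)^{-1}\mathcal{M} \subset \mathcal{M}$. It is open in $\Omega$, since near such a $\mu_0$ the resolvent is a norm-convergent power series in $(\mu - \mu_0)(\mu_0 I - C)^{-1}$. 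It is also closed in $\Omega$, by norm continuity of the resolvent and closedness of $\mathcal{M}$. Connectedness of $\Omega$ then gives invariance on all of $\Omega \supset (-\infty,0)$. Equivalently, for $x \in \mathcal{M}$ and $y \in \mathcal{M}^{\perp}$, the function $\mu \mapsto \langle (\mu I - C)^{-1}x, y\rangle$ is analytic on $\Omega$ and vanishes for $|\mu| > 2$, so it vanishes on $\Omega$.

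With $(\lambda I - A)^{-1}\mathcal{M} \subset \mathcal{M}$ for all $\lambda > 0$, the exponential formula from semigroup theory gives
$$S_t f = \lim_{n \to \infty} \big(\tfrac{n}{t}(\tfrac{n}{t} I - A)^{-1}\big)^n f$$
in norm, for $t > 0$ and each $f$. Each approximant maps $\mathcal{M}$ into $\mathcal{M}$, and $\mathcal{M}$ is closed, so $S_t \mathcal{M} \subset \mathcal{M}$. If one prefers to avoid citing the exponential formula, the same conclusion follows from the Yosida approximants $A_\lambda = \lambda A(\lambda I - A)^{-1} = \lambda^2(\lambda I - A)^{-1} - \lambda I$. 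These are bounded operators leaving $\mathcal{M}$ invariant, so $e^{tA_\lambda}$ leaves $\mathcal{M}$ invariant as well, and $e^{tA_\lambda} \to S_t$ strongly as $\lambda \to \infty$.
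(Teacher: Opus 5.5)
Your proof is correct, but it follows a different route from the paper.

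\textbf{The paper's route.} The paper rescales to the contractive (in fact isometric) semigroup $\widetilde{S}_t = e^{t}S_{2t}$ and computes its generator $\widetilde{A} = 2A + I$ via Lemma \ref{lcc}. It then cites the Sz.-Nagy--Foias theorem: a closed subspace is invariant for a contraction semigroup if and only if it is invariant for the cogenerator $(\widetilde{A}+I)(\widetilde{A}-I)^{-1}$. A two-line computation identifies this cogenerator as $I - C$. Both directions of the equivalence are thus delegated to the cited theorem.

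\textbf{Your route.} You work with $\{S_t\}$ directly.
\begin{itemize}
\item For (b)$\Rightarrow$(a) you use the Laplace representation $Cf = \int_0^\infty S_t f\,dt$.
\item For (a)$\Rightarrow$(b) you combine three ingredients:
\begin{itemize}
\item the identity $(\lambda I - A)^{-1} = C(I+\lambda C)^{-1}$;
\item the standard fact that an invariant subspace of $C$ is invariant for $(\mu I - C)^{-1}$ on the unbounded component of $\C \setminus \sigma(C)$, which contains $(-\infty,0)$;
\item the exponential formula, or Yosida approximation, to recover $S_t$ from resolvents.
\end{itemize}
\end{itemize}
Your resolvent identity does the job that $V = I - C$ does in the paper.

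\textbf{Comparison.} The paper's argument is shorter. It also connects nicely with Remark \ref{RRR}: the cogenerator of the isometric semigroup $\widetilde{S}_t = U_{2t}$ is exactly the unitary $I - C$. Your argument replaces the cogenerator theorem and its $H^\infty$-functional-calculus background with standard $C_0$-semigroup facts. It needs no rescaling to a contraction semigroup and makes the easy direction explicit.

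\textbf{A cosmetic point.} The factorization $\lambda I - A = (-A)(I+\lambda C)$ is an identity of unbounded operators on the common domain $\mathcal{D}(A)$; this works because $C$ maps into $\mathcal{D}(A)$. The bounded identity then follows by inverting, using that $I + \lambda C$ is invertible since $-1/\lambda \notin \partial D(1,1)$ for $\lambda > 0$. Your phrase ``extends since both sides are bounded'' is not quite the right justification, but nothing in the proof depends on it.
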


Before getting to the proof, we need a standard  technical detail concerning dilations of semigroups. 

\begin{Lemma}\label{lcc}
Suppose $\{T_{t}\}_{t \geq 0}$ is a strongly continuous semigroup of bounded Hilbert space operators with infinitesimal generator $A$  and $\lambda, \mu \geq 0$. Then 
$$\widetilde{T}_{t} := e^{\lambda t} T_{\mu t}, \quad t \geq 0, $$
is also a strongly continuous semigroup of operators with infinitesimal  generator $\widetilde{A}$ satisfying 
$\widetilde{A} = \mu A + \lambda I.$
\end{Lemma}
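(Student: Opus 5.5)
The plan is a direct verification in three steps: the semigroup law, strong continuity, and the generator together with its domain.

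\emph{Semigroup law.} We have $\widetilde{T}_0 = e^{0}T_0 = I$. For $s,t\ge 0$, scalars commute with operators, so
$$\widetilde{T}_s\widetilde{T}_t = e^{\lambda s}e^{\lambda t}T_{\mu s}T_{\mu t} = e^{\lambda(s+t)}T_{\mu(s+t)} = \widetilde{T}_{s+t}.$$
Here we use the semigroup property of $\{T_t\}_{t\ge0}$ and $\mu\ge 0$, which keeps $\mu t$ in the parameter range $[0,\infty)$.

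\emph{Strong continuity.} For fixed $f$ we split
$$\widetilde{T}_t f - f = e^{\lambda t}(T_{\mu t}f - f) + (e^{\lambda t}-1)f.$$
As $t\to0^+$, the first term tends to $0$ because $\mu t\to 0^+$ and $e^{\lambda t}$ stays bounded near $t=0$. The second term tends to $0$ trivially. If $\mu=0$, then $T_{\mu t}=I$ and this step is immediate.

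\emph{Generator.} For $f\in\mathcal{D}(A)$ and $\mu>0$, we write
$$\frac{\widetilde{T}_t f - f}{t} = e^{\lambda t}\,\mu\,\frac{T_{\mu t}f - f}{\mu t} + \frac{e^{\lambda t}-1}{t}\,f.$$
As $t\to0^+$, the first term converges in norm to $\mu Af$ and the second to $\lambda f$. Hence $f\in\mathcal{D}(\widetilde{A})$ and $\widetilde{A}f = \mu Af + \lambda f$.

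For the reverse inclusion, suppose the limit defining $\widetilde{A}f$ exists. Solving the same identity for $(T_{\mu t}f - f)/(\mu t)$ expresses it as a norm-convergent combination: multiply by $e^{-\lambda t}/\mu$ and subtract the convergent term $\frac{e^{\lambda t}-1}{t}f$. So $f\in\mathcal{D}(A)$, the domains coincide, and $\widetilde{A} = \mu A + \lambda I$ on $\mathcal{D}(A)$.

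In the degenerate case $\mu=0$ the semigroup is $\widetilde{T}_t = e^{\lambda t}I$, whose generator is $\lambda I$ with domain the whole space. This agrees with the formula once $0\cdot A$ is read as $0$; this is the only point needing a remark.

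I do not expect a real obstacle. The only care needed is the domain equality in the generator step, which the algebraic inversion above handles for $\mu>0$.
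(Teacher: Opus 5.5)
Your proposal is correct and takes the same route as the paper: you split the difference quotient $(\widetilde{T}_t f - f)/t$ into a piece coming from $e^{\lambda t}$ and a piece coming from $T_{\mu t}$, then pass to the limit. You are actually more careful than the paper, which takes the semigroup law and strong continuity for granted, only establishes $\mathcal{D}(A) \subseteq \mathcal{D}(\widetilde{A})$ with the formula there, and divides by $\mu t$ without excluding $\mu = 0$; your reverse-inclusion argument and your remark on the degenerate case $\mu = 0$ fill exactly those points.
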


\begin{proof}
It suffices to verify the formula for the infinitesimal  generator  $\widetilde{A}$. Indeed, 
\begin{align*}
\widetilde{A} f&= \lim_{t \to 0^{+}} \frac{\widetilde{T}_{t} f - f}{t}\\
& = \lim_{t \to 0^{+}} \frac{e^{\lambda t} T _{\mu t} f - f}{t}\\
& = \lim_{t \to 0^{+}} \frac{e^{\lambda t} T_{\mu t} f - T_{\mu t} f + T_{\mu t} f - f}{t}\\
& = \lim_{t \to 0^{+}} \frac{e^{\lambda t} - 1}{t} T_{\mu t} f + \lim_{t \to 0^{+}} \frac{T_{\mu t} f - f}{t}\\
& = \lambda f + \mu \lim_{t \to 0^{+}} \frac{T_{\mu t} f - f}{\mu t}\\
& = \lambda f + \mu A f,
\end{align*}
which proves the desired formula. 
\end{proof}

\begin{proof}[Proof of Theorem \ref{ISCe}]
Proposition \ref{0w349835345} says that $\{S_t\}_{t \geq 0}$ is a strongly continuous semigroup with $\|S_t\| = e^{-\frac{t}{2}}$. Now define 
$$\widetilde{S_t} =  e^{t} S_{2 t}, \quad t \geq 0,$$
and observe that $\|\widetilde{S_{t}}\| = 1$ and so $\{\widetilde{S_t}\}_{t \geq 0}$ is a {\em contractive} semigroup. 

By the general theory of contractive semigroups \cite[p.~146]{MR629828}, a closed subspace $\mathcal{M}$ of $L^2(0, 1)$ satisfies 
$\widetilde{S_t} \mathcal{M} \subset \mathcal{M}$ if and only if $V \mathcal{M} \subset \mathcal{M}$, where 
$$V := (\widetilde{A} + I)(\widetilde{A} - I)^{-1}$$ is the {\em cogenerator} of $\{\widetilde{S_t}\}_{t \geq 0}$ and $\widetilde{A}$ is the infinitesimal generator of $\{\widetilde{S_t}\}_{t \geq 0}$. 

We also have the identity 
$\widetilde{A} = 2 A + I,$ where $A$ is the infinitesimal generator of $\{S_t\}_{t \geq 0}$ (Lemma \ref{lcc}). Combining this with the identity $(-A)^{-1} = C$, as observed earlier in \eqref{LaplaceT}, we see that 
\begin{align*}
V &= (\widetilde{A} + I)(\widetilde{A} - I)^{-1}\\
& = (2 A + 2 I) (2A)^{-1}\\
& = I + A^{-1}\\
& = I - C.
\end{align*}
Since the operators $V = I - C$ and $C$ have the same invariant subspaces, it follows that  $S_{t} \mathcal{M} \subset \mathcal{M}$ for all $t \geq 0$ if and only if $C \mathcal{M} \subset \mathcal{M}$. 
\end{proof}

Certain obvious invariant subspaces for $C$ are 
$$\mathcal{M}_{a} := \{f \in L^2(0, 1): \mbox{$f = 0$ a.e.~on  $[0, a]$}\}, \quad 0 \leq a \leq 1.$$
Notice how $S_{t} \mathcal{M}_{a} \subset \mathcal{M}_{a}$ due to the fact that $\phi_{t}(x) \leq x$ for all $t \geq 0$ (Figure \ref{phi_POS}). It is known  \cite{MR31110, MR92124} that $\mathcal{M}_{a}$, $0 \leq a \leq 1$,   are the {\em only} invariant subspaces for the classical Volterra operator from \eqref{Volt}.  We will explore other types of invariant subspaces for $C$ below.

Theorem \ref{ISCe} says that a closed subspace $\mathcal{M} \subset L^2(0, 1)$ is invariant for $C$ if and only if it is invariant for every $S_t$, $t \geq 0$. We now use an idea from \cite{MR2858500, MR3057690} to explore the complexity of the invariant subspaces for the individual unitary operators
$$U_{t} = e^{\frac{t}{2}} S_{t}$$ from Proposition \ref{Ut} (which are the same as those for $S_t$). Though this analysis might appear a bit of a diversion from our original discussion, which considers the common invariant subspaces for the semigroup $\{S_{t}\}_{t \geq 0}$, we feel that $U_t$ is an interesting operator in its own right. 

Fix  $t > 0$ and recall  the mapping on $[0, 1]$ defined by 
$$\phi_{-t}(x) = \frac{e^t x}{(e^t - 1) x + 1}.$$
As observed earlier, $\phi_{-t}(x) \geq x$ (see Figure \ref{phi_NEG}). Set 
$$\psi_{-t}(x) := \frac{\phi_{-t}(x)}{x}$$ and observe how this notation says that 
$$S_{-t} f = \psi_{-t} \cdot (f \circ \phi_{-t}).$$
For each $n \in \Z$, define 
$$a_{n} := \phi_{-t n}(\tfrac{1}{2}) = \frac{1}{1 + e^{-nt}}$$ and notice that 
$$0 < \ldots a_{-2} < a_{-1} < a_0 < a_1 < a_2 < \ldots < 1$$
and the sequence $(a_n)_{n \in \Z}$ satisfies 
$$\lim_{n \to \infty} a_{n} = 1 \; \mbox{while} \; \lim_{n \to -\infty} a_n = 0.$$ As a result, 
$$\bigcup_{n \in \Z} [a_{n}, a_{n + 1}] = (0, 1).$$
It is important to note that the $a_n = a_n(t)$ depend on $t$.

For $f \in L^2(0, 1)$ define $V_t f$ to be the two sided sequence of functions 
$$V_t f := \Big((f \circ \phi_{-n t}) \cdot \sqrt{\phi_{-nt}'}\Big)_{n \in \Z}$$
and observe that 
\begin{align*}
\int_{0}^{1} |f(x)|^2 dx & = \sum_{n \in \Z}\int_{0}^{1} |f \cdot \chi_{[a_n, a_{n + 1}]}|^2 dx\\
& = \sum_{n \in \Z} \int_{a_0}^{a_{1}} |f \circ \phi_{-nt}|^2 \phi_{-nt}' dx\\
&= \sum_{n \in \Z} 
\Big\|f \circ \phi_{-nt} \sqrt{\phi_{-nt}'}\Big\|^{2}_{L^2[a_{0}, a_{1}]}.
\end{align*}
This shows that $V_t$ defines an isometric isomorphism from $L^2(0, 1)$ onto the vector-valued sequence space $\ell^2(\Z, L^2[a_0, a_{ 1}])$, the bilateral sequence of functions, each in $L^2[a_0, a_1]$, with square summable $L^2$ norms. 
Again, notice that $a_n$ depends on $t$. In particular, 
$$L^{2}[a_0, a_1] = L^{2}[\tfrac{1}{2}, \tfrac{1}{1 + e^{-t}}].$$

For $f \in L^2(0, 1)$ we also have
\begin{align*}
V_t \circ S_{-t} f & = V_t ((f \circ \phi_{-t}) \cdot \psi_{-t})\\
& = \Big((f \circ \phi_{-t (n + 1)}) \cdot (\psi_{-t} \circ \phi_{-nt}) \cdot \sqrt{\phi_{-nt}'}\Big)_{n \in \Z}.
\end{align*}
Moreover, a calculation shows that 
$$\psi_{-t} \circ \phi_{-nt} = \frac{\phi_{-t} \circ \phi_{-nt}}{\phi_{-nt}} =\frac{\phi_{-(n+ 1) t}}{\phi_{-nt}}$$
and 
$$\sqrt{\phi_{-nt}'} = \frac{e^{\frac{nt}{2}}}{((e^{n t} - 1)x + 1)^2} = e^{-\frac{nt}{2}} \psi_{-nt}.$$
Thus,
\begin{align*}
(\psi_{-t} \circ \phi_{-nt}) \cdot \sqrt{\phi_{-nt}'} & = \frac{\phi_{-(n+ 1) t}}{\phi_{-nt}} e^{\frac{nt}{2}} \psi_{-nt}\\
& = (\psi_{-t} \circ \phi_{-nt} ) \cdot \sqrt{\phi_{-nt}'} \\
& = \frac{\phi_{-(n+ 1) t} \cdot \frac{1}{x}}{\phi_{-nt} \cdot \frac{1}{x}} e^{\frac{nt}{2}} \psi_{-nt}\\
& = \frac{\psi_{-(n + 1) t}}{\psi_{-nt}} e^{-\frac{nt}{2}} \psi_{-nt}\\
& = e^{-\frac{nt}{2}}\psi_{-(n + 1) t}.
\end{align*}
Putting this all together yields 
\begin{equation}\label{VtSt}
V_{t} \circ S_{-t} f = \Big((f \circ \phi_{-(n + 1) t}) \cdot e^{-\frac{nt}{2}} \cdot  \psi_{-(n + 1) t}\Big)_{n \in \Z}.
\end{equation}

Now consider the unitary operator 
$$B_t: \ell^2(\Z, L^{2}[a_{0}, a_{1}]) \to  \ell^2(\Z, L^{2}[a_{0}, a_{1}])$$
defined by 
$$B_t\Big((f \circ \phi_{-nt}) \cdot \sqrt{\phi_{-nt}'}\Big)_{n \in \Z} := \Big((f \circ \phi_{-(n + 1)t}) \cdot \sqrt{\phi_{-(n + 1)t}'}\Big)_{n \in \Z},$$
and note that $B_t$ is a {\em bilateral backward shift}. Finally, by the calculations above, 
\begin{align*}
(e^{\frac{t}{2}} \circ B_{t})\circ V_{t} f & = e^{\frac{t}{2}} B_t \Big((f \circ \phi_{-nt}) \cdot \sqrt{\phi_{-nt}'}\Big)_{n \in \Z}\\
& = \Big(e^{\frac{t}{2}} (f \circ \phi_{-(n + 1) t}) \cdot \sqrt{\phi_{-(n + 1) t}'}\Big)_{n \in \Z}\\
& = \Big(e^{\frac{t}{2}} (f \circ \phi_{-(n + 1) t}) \cdot e^{-\frac{(n + 1)t}{2}} \cdot  \psi_{-(n +1) t}\Big)_{n \in \Z}\\
& = \Big((f \circ \phi_{-(n + 1) t}) \cdot e^{-\frac{nt}{2}} \cdot  \psi_{-(n + 1) t}\Big)_{n \in \Z},
\end{align*}
which is equal to the sequence $V_{t} \circ S_{-t} f$ from \eqref{VtSt}. This yields the following.

\begin{Theorem}
For each $t > 0$, $S_{-t}$ is unitarily equivalent to $e^{\frac{t}{2}} B_{t}$, where $B_t$ is the backward bilateral shift on $\ell^2(\Z, L^2[a_0, a_{ 1}])$.
\end{Theorem}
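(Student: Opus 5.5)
The plan is to make precise the computation just carried out: verify that $V_t$ is a unitary map, that $B_t$ is a well-defined bilateral backward shift, and that $V_t S_{-t} = e^{\frac{t}{2}} B_t V_t$. Then $S_{-t} = V_t^{-1}(e^{\frac{t}{2}}B_t)V_t$ is the required unitary equivalence.

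First I will settle that $V_t : L^2(0,1)\to \ell^2(\Z, L^2[a_0,a_1])$ is an isometric isomorphism, with the $n$th coordinate restricted to $[a_0,a_1]$. Isometry is the displayed computation. It uses two facts: $\phi_{-nt}$ maps $[a_0,a_1]$ increasingly onto $[a_n,a_{n+1}]$, which follows from $\phi_{-nt}(\tfrac12)=a_n$ and the group law $\phi_{-nt}\circ\phi_{-t}=\phi_{-(n+1)t}$; and the intervals $[a_n,a_{n+1}]$ tile $(0,1)$ up to null sets.

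Surjectivity comes next. Given $(h_n)_{n\in\Z}$ with $\sum\|h_n\|^2<\infty$, define $f$ on each $[a_n,a_{n+1}]$ by
$$f = \bigl(h_n/\sqrt{\phi_{-nt}'}\bigr)\circ\phi_{-nt}^{-1}.$$
This $f$ lies in $L^2(0,1)$ by the same change of variables, and $V_tf=(h_n)$.

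I will then write $B_t$ intrinsically as $B_t(h_n)_{n} = (h_{n+1})_{n}$ on $\ell^2(\Z,L^2[a_0,a_1])$. In this form it is plainly unitary and is a backward bilateral shift, and the defining formula in the text becomes exactly $B_tV_t f$.

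The heart of the argument is the identity $(\psi_{-t}\circ\phi_{-nt})\sqrt{\phi_{-nt}'}=e^{-\frac{t}{2}}\sqrt{\phi'_{-(n+1)t}}$. This is the step I expect to be the main obstacle, because the displayed derivation above contains slips: the formula given for $\sqrt{\phi_{-nt}'}$ has the wrong power in its denominator, and a factor $e^{\pm nt/2}$ gets mishandled. I would check it directly instead. Since $\phi_s'(x)=e^{-s}/((e^{-s}-1)x+1)^2$, we get
$$\sqrt{\phi_{-s}'(x)} = e^{-\frac{s}{2}}\,\frac{\phi_{-s}(x)}{x} = e^{-\frac{s}{2}}\,\psi_{-s}(x).$$
Then
$$(\psi_{-t}\circ\phi_{-nt})\,\psi_{-nt} = \frac{\phi_{-(n+1)t}}{\phi_{-nt}}\cdot\frac{\phi_{-nt}}{x} = \psi_{-(n+1)t}.$$
Multiplying by $e^{-\frac{nt}{2}}$ gives $e^{-\frac{nt}{2}}\psi_{-(n+1)t} = e^{\frac{t}{2}}\sqrt{\phi'_{-(n+1)t}}$. (The exponent in the identity stated at the start of this paragraph should therefore be $+\tfrac{t}{2}$.)

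Hence the $n$th coordinate of $V_tS_{-t}f$ equals $e^{\frac{t}{2}}(f\circ\phi_{-(n+1)t})\sqrt{\phi'_{-(n+1)t}}$, which is the $n$th coordinate of $e^{\frac{t}{2}}B_tV_tf$. This gives $V_tS_{-t}=e^{\frac t2}B_tV_t$ and the theorem follows.

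As a consistency check, $\|S_{-t}\| = e^{\frac{t}{2}}$ matches Proposition \ref{Ut}, since $S_{-t}=e^{t}S_t^*$. Equivalently, $U_{-t}=e^{-\frac{t}{2}}S_{-t}$ is unitarily equivalent to $B_t$.
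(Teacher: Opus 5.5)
Your proposal is correct and is essentially the paper's argument: conjugate by $V_t$ and check $V_t S_{-t} = e^{\frac{t}{2}} B_t V_t$ coordinatewise using $\sqrt{\phi_{-s}'} = e^{-\frac{s}{2}}\psi_{-s}$ and $(\psi_{-t}\circ\phi_{-nt})\,\psi_{-nt} = \psi_{-(n+1)t}$. You correctly identify the slips in the paper's displayed computation (the squared denominator and the stray $e^{\frac{nt}{2}}$), and your explicit surjectivity argument for $V_t$ fills in what the paper only asserts; you should, however, state the key identity with $e^{+\frac{t}{2}}$ from the outset rather than correcting it mid-paragraph.
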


Since, for $t > 0$, $S_{t}^{*} = e^{-t} S_{-t}$ (Proposition \ref{Ut}), we conclude that $S_{t}$ is unitarily equivalent to $e^{-\frac{t}{2}} B_{t}^{*}$. Moreover, $B_{t}^{*}$ will be the {\em bilateral forward shift} on the vector-valued sequence space $\ell^2(\Z, L^2[a_0, a_{1}])$. Since $U_{t} = e^{\frac{t}{2}} S_{t}$, we obtain the following.

\begin{Corollary}
For each  $t > 0$, the unitary operator $U_{t}$ is unitarily equivalent to $B_{t}^{*}$ on $\ell^2(\Z, L^2[a_0, a_{1}])$.
\end{Corollary}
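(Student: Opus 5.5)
The corollary follows by taking adjoints in the preceding theorem and rescaling. The preceding theorem gives, for $t>0$,
$$V_t S_{-t} V_t^{-1} = e^{\frac{t}{2}} B_t,$$
where $V_t$ is the isometric isomorphism of $L^2(0,1)$ onto $\ell^2(\Z, L^2[a_0,a_1])$. Since $V_t$ is unitary, taking adjoints gives
$$V_t S_{-t}^{*} V_t^{-1} = e^{\frac{t}{2}} B_t^{*}.$$

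Proposition \ref{Ut}(a) is stated for parameters $t\ge 0$, but its proof is the same algebraic identity for any real $t$. Replacing $t$ by $-t$ in that identity gives $S_{-t}^{*} = e^{t} S_{t}$. Equivalently, from Proposition \ref{Ut}(b), $S_t^{*}=e^{-t}S_{-t}$ gives $S_t = e^{-t} S_{-t}^{*}$.

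Substituting this into the conjugated identity yields
$$V_t S_t V_t^{-1} = e^{-t}\, V_t S_{-t}^{*} V_t^{-1} = e^{-t} e^{\frac{t}{2}} B_t^{*} = e^{-\frac{t}{2}} B_t^{*}.$$
Multiplying by $e^{\frac{t}{2}}$ then gives
$$V_t U_t V_t^{-1} = B_t^{*},$$
which is the claim.

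The remaining points are short checks. $B_t$ is unitary: it is a bilateral shift of the index $n$ on the sequence space, so it is invertible and isometric, and hence $B_t^{*} = B_t^{-1}$ is the forward shift. $V_t$ is surjective: given any sequence $(g_n)\in \ell^2(\Z,L^2[a_0,a_1])$, define $f$ on each interval $[a_n,a_{n+1}]$ by pulling $g_n$ back through $\phi_{-nt}$ with the Jacobian factor. The norm computation already given shows that this $f$ lies in $L^2(0,1)$ and has the right norm.

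The only step needing care is the sign convention in Proposition \ref{Ut}(a). If extending it to negative parameters is uncomfortable, the version $S_t = e^{-t}S_{-t}^{*}$ obtained directly from (b) avoids the issue, and that is the route I would use.
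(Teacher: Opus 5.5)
Your proposal is correct and follows the paper's argument: combine $V_t S_{-t} V_t^{-1} = e^{\frac{t}{2}} B_t$ with $S_t^{*} = e^{-t} S_{-t}$, take adjoints to see that $S_t$ is unitarily equivalent to $e^{-\frac{t}{2}} B_t^{*}$ via $V_t$, and then rescale by $e^{\frac{t}{2}}$. One small citation fix: $S_t^{*}=e^{-t}S_{-t}$ is part (a) of Proposition~\ref{Ut}, not part (b), although (b) yields the same identity.
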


 This seems to say that the invariant subspaces of $U_t$ are complicated and are as rich as those for  the bilateral forward shift $B^{*}_{t}$. 

\begin{Example}
Consider the subspace $\mathcal{N} \subset \ell^2(\Z, L^2[a_0, a_{1}])$ consisting of the $(g_{n})_{n \in \Z}$
such that $g_n = 0$ for all $n < -1$ and notice that $\mathcal{N}$ is closed and $B_{t}^{*} \mathcal{N} \subset \mathcal{N}$. Then, via $V_{t}$ from above, 
$$0 = g_{n} = (f \circ \phi_{-nt}) \cdot \sqrt{\phi_{-nt}'} \; \mbox{on $[a_0, a_{ 1}]$}, \quad n < -1.$$
and so 
$$V_{t}^{-1} \mathcal{N} = \{f \in L^2(0, 1): f \circ \phi_{m t} = 0 \; \mbox{on $[a_{0}, a_{1}]$} \; \mbox{for all $m \geq 1$}\}.$$
A moment of thought shows $V_{t}^{-1} \mathcal{N}$ turns out to be
$$\{f \in L^2(0, 1): \mbox{$f = 0$ a.e. on $[0, a_{0}]$}\}.$$ Notice how this is the invariant $\mathcal{M}_{a_0}$ for $C$ discussed earlier. \end{Example}

The final part of the discussion of the complexity of the invariant subspaces for the Ces\`{a}ro operator $C$ on $L^{2}(0, 1)$, as well as their complete description, comes from the Beurling--Lax theorem. In order to get there, we need a few preliminaries. Recalling the functions
$$\phi_{t}(x) : = \frac{e^{-t} x}{(e^{-t} - 1) x + 1}, \quad t \geq 0,$$ 
and the mapping
$$x(u) = \frac{1}{1 + e^u}$$ from \eqref{xxxuuu},
a brief calculation shows that 
\begin{equation}\label{linear}
\phi_{t}(x(u)) = x(u + t).
\end{equation}

Now define the translation semigroup $\{T_{t}\}_{t \geq 0}$ on $L^2(-\infty, \infty)$ by 
$$(T_{t} g)(u) = g(u + t). $$ The isometric isomorphism $\Phi$ from Proposition \ref{sdfsdf88} and \eqref{linear} tell us that 
\begin{align*}
(\Phi \circ S_{t} f)(u) & = \frac{e^{\frac{u}{2}}}{1 + e^{u}} \frac{\phi_{t}(x(u))}{x(u)} f(\phi_{t}(x(u)))\\
& = \frac{e^{\frac{u}{2}}}{1 + e^{u + t}} f(x(u + t))\\
& = e^{-\frac{t}{2}} \frac{e^{\frac{u + t}{2}}}{1 + e^{u + t}}  f(x(u + t))\\
& = e^{-\frac{t}{2}} (T_{t}  \circ \Phi f)(u).
\end{align*}
We summarize the above discussion with the following result. 

\begin{Proposition}
$\Phi \circ S_{t} \circ \Phi^{-1} = e^{-\frac{t}{2}}T_{t}$ for all $t \geq 0$.
\end{Proposition}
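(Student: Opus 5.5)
The plan is a direct verification on an arbitrary $g \in L^2(-\infty,\infty)$. It is equivalent to show $\Phi \circ S_t = e^{-\frac{t}{2}} T_t \circ \Phi$ on $L^2(0,1)$, since $\Phi$ is an isometric isomorphism by Proposition \ref{sdfsdf88}. So fix $f \in L^2(0,1)$ and $t \geq 0$, and compute $(\Phi S_t f)(u)$ for almost every $u \in (-\infty,\infty)$. Writing $x(u) = (1+e^u)^{-1}$, the definition of $\Phi$ together with \eqref{sdfsdfvVVV} gives
$$(\Phi S_t f)(u) = \frac{e^{\frac{u}{2}}}{1+e^{u}} \cdot \frac{\phi_t(x(u))}{x(u)} \, f(\phi_t(x(u))).$$

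The key step is the linearization identity \eqref{linear}, $\phi_t(x(u)) = x(u+t)$. I would check it directly:
$$\phi_t\big((1+e^u)^{-1}\big) = \frac{e^{-t}}{(e^{-t}-1) + 1 + e^{u}} = \frac{e^{-t}}{e^{-t}+e^{u}} = \frac{1}{1+e^{u+t}}.$$
Consequently
$$\frac{\phi_t(x(u))}{x(u)} = \frac{1+e^{u}}{1+e^{u+t}}.$$
This factor cancels the $1+e^u$ in the denominator of the weight in $\Phi$, leaving
$$(\Phi S_t f)(u) = \frac{e^{\frac{u}{2}}}{1+e^{u+t}}\, f(x(u+t)).$$

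Finally, write $e^{\frac{u}{2}} = e^{-\frac{t}{2}} e^{\frac{u+t}{2}}$. This identifies the right-hand side as $e^{-\frac{t}{2}} (\Phi f)(u+t) = e^{-\frac{t}{2}} (T_t \Phi f)(u)$. Composing on the right with $\Phi^{-1}$, which exists by Proposition \ref{sdfsdf88}, yields $\Phi \circ S_t \circ \Phi^{-1} = e^{-\frac{t}{2}} T_t$.

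There is no real obstacle; the only point needing care is the algebra in \eqref{linear}. All identities hold pointwise for every $u$, so they hold almost everywhere and hence as elements of $L^2$. As a consistency check, the conclusion matches Proposition \ref{0w349835345}: $T_t$ is an isometry, so $\|S_t\| = e^{-\frac{t}{2}}$.
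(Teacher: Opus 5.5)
Your proposal is correct and follows essentially the same route as the paper: apply $\Phi$ to $S_t f$, use the linearization identity $\phi_t(x(u)) = x(u+t)$ to cancel the factor $1+e^{u}$, and rewrite $e^{\frac{u}{2}} = e^{-\frac{t}{2}} e^{\frac{u+t}{2}}$ to recognize $e^{-\frac{t}{2}}(T_t \Phi f)(u)$. Your explicit verification of $\phi_t(x(u)) = x(u+t)$, which the paper leaves as a brief calculation, is also correct.
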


Combining the previous proposition with Theorem \ref{ISCe} yields a connection between the invariant subspaces for $C$ and the common invariant subspaces for the translation semigroup $\{T_{t}\}_{t \geq 0}$. 

\begin{Corollary}\label{Cocococ999}
A closed subspace $\mathcal{M}$ of $L^2(0, 1)$ is invariant for the Ces\`{a}ro operator $C$ if and only if $\Phi \mathcal{M}$ is invariant every $T_{t}, t \geq 0$. 
\end{Corollary}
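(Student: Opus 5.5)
The argument is a direct chain of equivalences, built from Theorem \ref{ISCe} and the preceding proposition $\Phi \circ S_{t} \circ \Phi^{-1} = e^{-\frac{t}{2}} T_{t}$. Fix a closed subspace $\mathcal{M}$ of $L^2(0, 1)$. Proposition \ref{sdfsdf88} says $\Phi$ is an isometric isomorphism of $L^2(0, 1)$ onto $L^2(-\infty, \infty)$, so $\Phi\mathcal{M}$ is a closed subspace of $L^2(-\infty, \infty)$. This is needed for the statement to make sense in the setting of closed invariant subspaces.

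By Theorem \ref{ISCe}, $C\mathcal{M} \subset \mathcal{M}$ holds if and only if $S_t \mathcal{M} \subset \mathcal{M}$ for every $t \geq 0$. For a fixed $t \geq 0$, the inclusion $S_t\mathcal{M} \subset \mathcal{M}$ is equivalent, after applying the bijection $\Phi$, to
$$\Phi S_t \Phi^{-1} (\Phi \mathcal{M}) \subset \Phi\mathcal{M}.$$
By the preceding proposition this reads $e^{-\frac{t}{2}} T_t (\Phi\mathcal{M}) \subset \Phi \mathcal{M}$. Since $e^{-\frac{t}{2}} \neq 0$ and $\Phi\mathcal{M}$ is a linear subspace, it is closed under multiplication by the scalar $e^{\frac{t}{2}}$. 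Hence this last inclusion is equivalent to $T_t(\Phi\mathcal{M}) \subset \Phi\mathcal{M}$.

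Chaining these equivalences over all $t \geq 0$ gives the corollary. There is no real obstacle in the argument. The only points requiring care are to invoke the bijectivity of $\Phi$ in both directions, so that $\Phi^{-1}$ applied to $\Phi \mathcal{M}$ recovers $\mathcal{M}$, and to note that nonzero scalar multiples do not affect invariance.
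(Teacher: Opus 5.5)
Your proposal is correct and takes the same route as the paper, which obtains this corollary by combining Theorem \ref{ISCe} with the proposition $\Phi \circ S_{t} \circ \Phi^{-1} = e^{-\frac{t}{2}} T_{t}$. You have written out the routine details the paper leaves implicit: the bijectivity of $\Phi$, and the fact that the nonzero scalar $e^{-\frac{t}{2}}$ does not affect invariance of a linear subspace.
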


This leads us to a discussion of the closed subspaces $\mathcal{N}$ of $L^2(-\infty, \infty)$ which are $T_{t}$-invariant for all $t \geq 0$. These can be described by several paths. Let us use a theorem of Helson and consider the semigroup of operators 
$$\mathcal{S}_{t}: L^2(-\infty, \infty) \to L^2(-\infty, \infty), \quad \mathcal{S}_{t} f(x) = e^{i t x} f(x), \quad t \in (-\infty, \infty).$$
Subspaces $\mathcal{N}$ of $L^2(-\infty, \infty)$ which are $\mathcal{S}_t$- invariant for all $t > 0$ come in two basic types. There are the {\em doubly invariant} ones, those $\mathcal{N}$ for which are $\mathcal{S}_t$-invariant for all $t \in (-\infty, \infty)$ and the {\em simply invariant} ones, those $\mathcal{N}$ which are $\mathcal{S}_{t}$-invariant for all $t > 0$ but are not $\mathcal{S}_{t}$-invariant for some $t < 0$. A well-known theorem of Helson  \cite[Lec. V]{MR0171178} says that $\mathcal{N}$ is doubly invariant if and only if $\mathcal{N} = \chi_{A} L^2(-\infty, \infty)$ for some measurable subset $A$ of $(-\infty, \infty)$ and simply invariant if and only if $\mathcal{N} = q \mathcal{H}^2$, where $q$ is a measurable unimodular function on $(-\infty, \infty)$ and 
$\mathcal{H}^2 := \mathscr{F}^{-1} (\chi_{(0, \infty) }L^2(-\infty, \infty))$ is the Hardy space of the real line. 

The above discussion will yield the $T_{t}$-invariant subspaces for all $t > 0$ as follows. A simple computation yields 
$T_{t} = \mathscr{F}^{-1} \circ \mathcal{S}_t \circ \mathscr{F}$. Thus, $\mathcal{N}$ is $T_t$-invariant if and only if $\mathscr{F} (\mathcal{N})$ is $\mathcal{S}_t$-invariant. This brings us to the result that if $\mathcal{N}$ is $T_{t}$-invariant, then 
$$\mathcal{N}  = \mathscr{F}^{-1} (\chi_{A} L^2(-\infty, \infty))$$ for some measurable subset $A$ of $(-\infty, \infty)$, precisely when $\mathcal{N}$ is $T_{t}$-invariant for all $t \in (-\infty, \infty)$, or 
$$\mathcal{N} = \mathscr{F}^{-1} (q \mathcal{H}^2)$$ for some measurable unimodular function $q$, precisely when $\mathcal{N}$ is $T_{t}$-invariant for all $t  > 0$ but not $T_{t}$-invariant for some $t < 0$. 


Combine the above discussion to Corollary \ref{Cocococ999} to obtain the following. 

\begin{Corollary}
Let $\mathcal{M}$ be a closed subspace of $L^2(0, 1)$. Then $\mathcal{M}$ is invariant for the Ces\`{a}ro operator if and only if either 
$$\mathcal{M} = \Phi^{-1} (\mathscr{F}^{-1}(\chi_{A} L^2(-\infty, \infty)))$$
for some measurable set $A \subset (-\infty, \infty)$ or 
$$\mathcal{M} = \Phi^{-1}  (\mathscr{F}^{-1} (q \mathcal{H}^2))$$
for some measurable unimodular function $q$ on $(-\infty, \infty)$. 
\end{Corollary}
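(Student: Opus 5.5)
The plan is to obtain the final Corollary by pulling back the classification of translation-invariant subspaces along the unitary $\Phi$. The proof has three steps: a reduction, the Fourier/Helson step, and the pull-back.

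First, by Corollary \ref{Cocococ999}, a closed subspace $\mathcal{M}$ of $L^2(0,1)$ is $C$-invariant if and only if $\mathcal{N} := \Phi\mathcal{M}$ is a closed subspace of $L^2(-\infty,\infty)$ with $T_t\mathcal{N}\subset\mathcal{N}$ for every $t\geq 0$. Closedness transfers because $\Phi$ is an isometric isomorphism (Proposition \ref{sdfsdf88}). That corollary in turn rests on Theorem \ref{ISCe} and the identity $\Phi S_t\Phi^{-1}=e^{-t/2}T_t$; the positive scalar $e^{-t/2}$ does not affect invariance. So it suffices to classify the closed subspaces of $L^2(-\infty,\infty)$ invariant under every $T_t$ with $t\ge 0$.

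Second, conjugate by the Fourier transform. I will check the intertwining $\mathscr{F}T_t=\mathcal{S}_t\mathscr{F}$ directly from \eqref{FFFFTTTT}: for a Schwartz function $g$, the substitution $x\mapsto x-t$ in $\int g(x+t)e^{-ixs}\,dx$ produces the factor $e^{its}$, and density plus unitarity of $\mathscr{F}$ extends this to all of $L^2$. Hence $\mathcal{N}$ is $T_t$-invariant for all $t\ge0$ exactly when $\mathscr{F}\mathcal{N}$ is $\mathcal{S}_t$-invariant for all $t\ge0$. Split into two cases.
\begin{enumerate}
\item If $\mathscr{F}\mathcal{N}$ is also invariant for all $t<0$ (doubly invariant), Helson's theorem gives $\mathscr{F}\mathcal{N}=\chi_A L^2(-\infty,\infty)$ for a measurable $A$.
\item Otherwise $\mathscr{F}\mathcal{N}$ is simply invariant, and the Helson/Beurling--Lax theorem gives $\mathscr{F}\mathcal{N}=q\mathcal{H}^2$ with $q$ unimodular.
\end{enumerate}
The zero subspace belongs to case (a) with $A=\varnothing$.

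Third, apply $\Phi^{-1}$ to obtain the two stated forms for $\mathcal{M}$.

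The converse is direct. $\chi_A L^2$ is invariant under multiplication by $e^{itx}$ for every real $t$. For $q\mathcal{H}^2$, multiplication by $e^{itx}$ with $t\geq0$ preserves the relevant Hardy space, since it corresponds to a one-sided translation that preserves support in a half-line. Transporting back through $\mathscr{F}^{-1}$, then $\Phi^{-1}$, and finally Corollary \ref{Cocococ999} shows that these subspaces are $C$-invariant.

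The main obstacle is bookkeeping of conventions: the paper defines $\mathcal{H}^2$ via $\mathscr{F}^{-1}(\chi_{(0,\infty)}L^2)$ and applies it on the Fourier side. I would first verify with the sign convention in \eqref{FFFFTTTT} which half-line support is preserved by $\mathcal{S}_t$ for $t\ge0$. If the orientation is reversed, the correct class is $q\overline{\mathcal{H}^2}$, which can be rewritten as $q\mathcal{H}^2$ by a reflection absorbed into the choice of unimodular $q$ and conjugation conventions. I would settle this by testing the half-line subspace corresponding to $\mathcal{M}_{a}$, which is known to be $C$-invariant.
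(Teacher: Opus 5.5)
Your argument is correct and is essentially the paper's proof: reduce via Corollary \ref{Cocococ999} to closed subspaces of $L^2(-\infty,\infty)$ invariant under every $T_t$, $t\geq 0$, conjugate by $\mathscr{F}$ using $\mathscr{F}T_t=\mathcal{S}_t\mathscr{F}$, apply Helson's doubly/simply invariant dichotomy, and pull back by $\Phi^{-1}$. The orientation you leave open is already right, because $\mathscr{F}(e^{itx}h)(s)=(\mathscr{F}h)(s-t)$ gives $\mathcal{S}_t\mathcal{H}^2\subset\mathcal{H}^2$ for $t\geq 0$; you should drop the fallback, which is false anyway, since $q\overline{\mathcal{H}^2}=q'\mathcal{H}^2$ with unimodular $q,q'$ would force $\mathcal{H}^2$ to be doubly invariant.
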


Let us work a few examples. 

\begin{Example}
For fixed $0 < a \leq 1$,
$$\mathcal{M}_{a} := \{f \in L^2(0, 1): \mbox{$f = 0$ almost everywhere on $[0, a]$}\}$$
is certainly a closed invariant subspace for the Ces\'{a}ro operator on $L^2(0, 1)$. Let us view $\mathcal{M}_{a}$ as the second type of subspace in the previous corollary. For any $b \in (-\infty, \infty)$ let $q$ be the unimodular function on $(-\infty, \infty)$ defined by  $q(x) = e^{-i bx}$. An integral substitution will show that 
$\mathscr{F}^{-1} (q \mathcal{H}^2),$
 is the set of $L^2(-\infty, \infty)$ functions which vanish almost everywhere on $(b, \infty)$
and thus is invariant for all $T_{t}$, $t \geq 0$. By the formula for $\Phi^{-1}$ from \eqref{gginnd}, these will  correspond to the $L^2(0, 1)$ functions $f(x)$ which vanish when 
$$\log \frac{1 - x}{x} \geq b,$$
that is, when 
$$0 \leq x \leq \frac{1}{1 + e^{b}}.$$
Now adjust $b$ so that $(1 + e^{b})^{-1} = a$ and so $\mathcal{M}_a = \Phi^{-1}(\mathscr{F}^{-1} (e^{-ib x} \mathcal{H}^2))$.
\end{Example}

\begin{Example}
Suppose that $A$ is a measurable subset of $(-\infty, \infty)$. From the previous corollary,  $$\mathcal{M} := \Phi^{-1}(\mathscr{F}^{-1}(\chi_{A} L^2(-\infty, \infty)))$$
is a closed invariant subspace for the Ces\`{a}ro operator on $L^2(0, 1)$. A calculation using the formula \eqref{gginnd} for $\Phi^{-1}$ says that 
$\mathcal{M}$ is the subspace of functions in $L^2(0, 1)$ of the form
$$x \mapsto \frac{1}{x} \sqrt{\frac{x}{1 - x}} (\mathscr{F}^{-1} f)\Big(\log \frac{1 - x}{x}\Big), \quad f \in L^2(A).$$
\end{Example}

\bibliographystyle{plain}

\bibliography{references}

\end{document}